\theoremstyle{plain}
\newtheorem{theorem}{Theorem}[section]
\newtheorem{lemma}[theorem]{Lemma}
\newtheorem{proposition}[theorem]{Proposition}
\newtheorem{corollary}[theorem]{Corollary}
\theoremstyle{definition}
\newtheorem{definition}[theorem]{Definition}
\newtheorem{remark}[theorem]{Remark}
\newtheorem{question}[theorem]{Question}
\newtheorem{example}[theorem]{Example}
\numberwithin{equation}{section}
\newcommand{\todo}[1]{\vspace{5 mm}\par \noindent
\marginpar{\textsc{}}
\framebox{\begin{minipage}[c]{0.95 \textwidth}
\tt #1 \end{minipage}}\vspace{5 mm}\par}
\newcommand\des{\mathop{\rm des}}
\newcommand{\Des}{\operatorname{Des}}
\newcommand{\la}{{\lambda}}
\newcommand{\g}{{\gamma}}
\newcommand{\be}{{\beta}}
\newcommand\nneg{\mathop{ \rm neg}}
\newcommand{\Colrm}{{\rm Col}}
\newcommand{\CS}{{\NN_0^{(r,n)}}}
\def\s{ \sigma }
\def\ZZ{{\mathbb Z}}
\def\NN{{\mathbb N}}
\def\k+1th{(k+1)^{th}}
 \DeclareMathOperator{\fmaj}{fmaj}
\DeclareMathOperator{\col}{col}
\def\fmaj{\mathop{\rm fmaj}\nolimits}
\def\imaj{\mathop{\rm imaj}\nolimits}
\def\icol{\mathop{\rm icol}\nolimits}
 \def\maj{\mathop{\rm maj}\nolimits}
\def\inv{\mathop{\rm inv}\nolimits}
\def\ides{\mathop{\rm ides}\nolimits}
\def\des{\mathop{\rm des}\nolimits}
\def\il{\bigl]\kern-.55em\bigl]}
\def\ir{\bigr]\kern-.55em\bigr]}
\title{Enumerating Wreath Products Via Garsia-Gessel Bijections }
\author{Riccardo Biagioli and Jiang Zeng}
\address{Universit\'e de Lyon, Universit\'e Lyon 1, Institut Camille
Jordan, UMR 5208 du CNRS, F-69622, Villeurbanne Cedex, France}
\email{biagioli@math.univ-lyon1.fr, zeng@math.univ-lyon1.fr}
\thanks{The two authors are supported by
the grant  ANR-08-BLAN-0243-03 and by the Program P2R franco-isra\'elien Math\'ematiques}
\begin{document}
\begin{abstract}
We generalize two bijections due to Garsia and Gessel to  compute the generating functions of the two vector statistics $(\des_G, \maj,\ell_G, \col)$ and $(\des_G, \ides_G, \maj, \imaj, \col, \icol)$ over the wreath product of a symmetric group by a cyclic group. Here  $\des_G$, $\ell_G$, $\maj$, $\col$, $\ides_G$, $\imaj_G$, and $\icol$ denote 
the number  of descents, length,  major index, color weight, inverse descents, inverse major index, and 
 inverse color weight, respectively.  Our main formulas generalize and unify  several known identities 
due to Brenti, Carlitz, Chow-Gessel, Garsia-Gessel, and Reiner on various distributions of statistics over Coxeter groups of type $A$ and $B$. 
\end{abstract}
\maketitle
\tableofcontents

\section{Introduction}
 Permutation statistics have a very natural setting within the theory of partitions as observed by 
 MacMahon~\cite{mac60}, Gordon~\cite{go63}, Stanley~\cite{s97}, Gessel~\cite{ge-thesis}, and Reiner~\cite{re93}, among others. This was made even more clear thanks to the work of Garsia and Gessel~\cite{gg79} who gave two remarkable bijections. The first one between sequences and pairs made of a permutation and a partition~\cite[\S 1]{gg79};   the second one between bipartite partitions and triplets made of a permutation and a coupe of special partitions~\cite[\S 2]{gg79}.  Thanks to these two bijections they were able to give elegant computations of the generating series 
 \begin{align}\label{e:gg1}
\sum_{n\geq 0}\frac{\sum_{\s\in S_n}t^{\des(\s)}q^{\maj(\s)}p^{\inv(\s)}}
{\prod_{i=0}^n(1-tq^i)}  \frac{u^n}{[n]_{p}!}=
\sum_{k\geq 0}t^k{e}[u]_{p}e[qu]_{p}\cdots e[q^ku]_{p},
 \end{align}
 and
 \begin{align}\label{e:gg2}
 \sum_{n\geq 0}\frac{u^n}{(t_1;q_1)_{n+1}(t_2;q_2)_{n+1}}
&\sum_{\s \in S_{n}}
t_1^{\des(\s)}t_2^{\des(\s^{-1})}q_1^{\maj(\s)}
q_2^{\maj(\s^{-1})} \nonumber\\
&=\sum_{k_1,k_2\geq 0}\frac{t_1^{k_1}t_2^{k_2}}
{(u ; q_1,q_2)_{k_1+1,k_2+1}}.
\end{align}
Here $\des$, $\maj$, and $\inv$ denote the number of descents, the
major index, and the number of inversions over the symmetric group $S_n$.

Extensions of the above bijections were given in \cite{berbia06,bl06,bc04,re93}.
In particular, Reiner \cite{re93, re93bis} generalized Garsia and Gessel's work 
to the hyperoctahedral group $B_n$, using $P$-partition theory. He obtained analogues of \eqref{e:gg1} and \eqref{e:gg2} for $B_n$ and $G(r,n)$, the wreath product of the symmetric group $S_{n}$ with the cyclic group $\ZZ_{r}$ (cf. \cite[Corollary 7.2 and 7.3]{re93}). 

In this paper, we give $G(r,n)$-analogues of the two Garsia and Gessel bijections (cf. Proposition~\ref{l:bijection} and Theorem~\ref{ggg}). 
A fundamental notion will be that of $\g$-compatible partitions, which generalizes that of $\s$-compatible partitions introduced by Garsia and Gessel in~\cite{gg79}. We use our bijections to give two different extensions of \eqref{e:gg1} and \eqref{e:gg2} for $G(r,n)$, by computing 
the generating functions of the two vector statistics $(\des_G, \maj,\ell_G, \col)$ and $(\des_G, \ides_G, \maj, \imaj, \col, \icol)$, whose definitions will be given in the next sections (cf. Theorem~\ref{teq:wreath} and Theorem~\ref{5stat}).

The aforementioned Reiner's results also compute similar generating functions but his definitions of the length and the major index are slightly different from ours due to a different choice of the generating set for $G(r,n)$. In the case of the hyperoctahedral group, more details explaining differences and relations between our and Reiner's results are given in a separate paper~\cite{bz09}, where connections with the work \cite{cg07} of Chow-Gessel are also established.

The choice of our statistics is motivated by the aim to take into account  the flag-major index introduced by Adin and Roichman in~\cite{ar01}. Indeed Theorem~\ref{teq:wreath} unify and generalize several known results due to Brenti, Carlitz, Chow-Gessel, Gessel, and Reiner, on various distributions of statistics over Coxeter groups of type $A$ and $B$.
Moreover, Theorem~\ref{5stat} will allow us to give an explicit description of the generating function of the Hilbert series of some $G(r,n)$-invariant algebras, studied by Adin and Roichman~\cite{ar01}, and involving the flag-major index.

Clearly, if we set $r=2$ in the identities given in Theorem~\ref{teq:wreath} and Theorem~\ref{5stat}, we find the analogous results for the hyperoctahedral group $B_{n}$. We point out that these identities are actually equivalent, and we show how to recover the general $G(r,n)$-case from the knowledge of the $B_{n}$-case. 

\section{Definitions and notation}
In this section we give some definitions, notation and results that will be used in the rest of this work. For $n \in \NN$ we let $[n]:= \{ 1,2, \ldots , n \} $ (where $[0]:= \emptyset $). Given $n, m \in \ZZ, \; n \leq m$, we let $[n,m]:=\{n,n+1, \ldots, m \}.$ The cardinality of a set $A$ will be denoted either by $|A|$ or by $\#A$.
Let $P$ be a statement: the characteristic function $\chi$ of $P$ is defined as $\chi(P)=1$ if $P$ is true, and $0$ otherwise.
As usual for $n \in \NN$, we let
\begin{eqnarray*}
(a;p)_n:=\left\{\begin{array}{ll} \;\; 1, & {\rm  if} \ n=0;\\
(1-a)(1-ap)\cdots (1-ap^{n-1}),& {\rm if} \ n\geq 1.
 \end{array}\right.
\end{eqnarray*}
Moreover, for $n, m \in \NN$ we let
$$(a;p,q)_{n,m}:=\left\{\begin{array}{ll} \;\; 1,
 & {\rm  if} \ n \ {\rm or} \ m \ {\rm are \ zero};\\
{\displaystyle\prod_{1\leq i\leq n}\prod_{1\leq j\leq m} (1-ap^{i-1}q^{j-1})}, & {\rm if} \ n,m \geq 1.
\end{array}\right.$$
For our study we need notation for $p$-analogues of integers and factorials. 
These are defined by the following expressions
\begin{align*}
[n]_p:&=1+p+p^2+\ldots + p^{n-1}, \\
[n]_p!:&=[n]_p[n-1]_p\cdots[2]_p[1]_p, \\
\ [\hat{n}]_{a,p}!:&=(-ap;p)_n [n]_p!,
\end{align*}
where $[0]_p!:=1$. For $n=n_0+n_1+\cdots +n_k$ with $n_{0}, \ldots, n_{k}\geq 0$ we define the $p$-analogue of
 multinomial coefficient by
$$
{\hat n\brack \hat n_0,n_1,\ldots, n_k}_{a,p}:=\frac{[\hat n]_{a,p}!}{[\hat n_0]_{a,p}![n_1]_p!\cdots [n_k]_p!}.
$$
Finally, 
\begin{equation}\label{exp}
e[u]_p:=\sum_{n\geq 0}\frac{u^n}{[n]_p !}, \quad {\rm and}  \quad
\hat e[u]_{a,p}:=\sum_{n\geq 0}\frac{u^n}{[\hat n]_{a,p}!}
\end{equation}
are a classical $p$-analogue and a $(a,p)$-analogue of the exponential function. 

\bigskip

Let $S_n$ be the symmetric group on
$[n]$. A permutation $\s \in S_n$ will be denoted by $\s=\s(1)\cdots
\s(n).$

Let $r,n \in \mathbb{P}$. The {\em wreath product} $\ZZ_r \wr S_n$ of
$\mathbb{Z}_r$ by $S_n$ is defined by
\begin{equation}\label{def-grn}
G(r,n):=\{(c_1,\ldots,c_n;\sigma) \mid c_i \in [0,r-1],\sigma \in
S_n\}.
\end{equation}
Any $c_i$ can be considered as the color
of the corresponding entry $\s(i)$. This is why
this group is also called the {\em group of r-colored
permutations}. Sometimes we will represent its elements in {\em window
notation} as
\[\g=[\g(1),\ldots, \g(n)]=[\s(1)^{c_1},\ldots, \s(n)^{c_n}].\]
Sometimes we will call  $\sigma(i)$ the {\em absolute value} of $\g(i)$, denoted $|\g(i)|$.
When it is not clear from the context, we will denote $c_i$ by
$c_i(\g)$. Moreover, if $c_i=0$, it will be omitted in the window notation of
$\g$. We denote by
\[\Colrm(\g):=(c_1,\ldots,c_n) \;\;\; {\rm and} \; \;\; \col(\g): =\sum_{i=1}^n c_i,\]
the {\em color vector} and the {\em color weight} of any $\g:=(c_1,\ldots,c_n; \s) \in G(r,n)$. For example, if $\g=[4^1, 3, 2^4, 1^2] \in G(5,4)$ then $\Colrm(\g)=(1,0,4,2)$ and $\col(\g)=7$.
\smallskip

The product in $G(r,n)$ is defined as follows: 
$$(c_1,\ldots,c_n;\s)\cdot (d_1,\ldots,d_n;\tau):=(d_1+c_{\s(1)},\ldots,d_n+c_{\s(n)};\s\tau),
$$
where the product $\sigma \tau$ is from right to left as usual.

  Clearly for a colored permutation $\g=({c_1},\ldots, {c_n}; \sigma)\in G(r,n)$ the {\em inverse colored permutation} is given by $\g^{-1}=(c'_1,\ldots, c'_n;\sigma^{-1})$, where
\begin{equation}\label{inverse}
\left\{\begin{array}{ll} c'_i=c_{\sigma^{-1}(i)}, & {\rm if} \ c_i=0; \\
                                     c'_i=r-c_{\sigma^{-1}(i)}, & {\rm otherwise}. \end{array}
                                     \right.
                                     \end{equation}
In this paper we will use mostly a different type of inverse. We define the {\em skew inverse 
permutation} of $\g$ by
\begin{align}\label{skew}
\tilde{\g}^{-1}:=(c_{\sigma^{-1}(1)},\ldots, c_{\sigma^{-1}(n)};\sigma^{-1}).
\end{align}
For example, if $\g=[3,6^1,4^3,7^2,2^1, 1,5] \in G(4,7)$ then 
$\tilde{\g}^{-1}=[6, 5^1, 1, 3^3, 7, 2^1,4^2]$, while $\g^{-1}=[6, 5^3, 1, 3^1, 7, 2^3,4^2]$. Note that  when $G(r,n)$ is a Coxeter group, the skew inverse is actually the inverse.
\smallskip

The group $G(r,n)$ is generated by the set $S_G:=\{s_0,s_1,\ldots,s_{n-1}\}$ where for
$i \in [n-1]$ 
\begin{equation}\label{gen-set}
s_i:=[1,\ldots,i-1,i+1,i,i+2,\ldots,n] \;\; {\rm and} \;\; s_0:=[1^1,2,\ldots,n],
\end{equation}
with relations given by the following Dynkin-like diagram
\begin{figure}[htdb!]
\centering
\includegraphics[scale=.6]{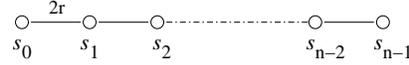}
\caption{The  Dynkin-like diagram of $G(r,n)$}\label{graphG}
\end{figure}

\begin{definition} In all the paper we will use the following order  
\begin{equation}\label{order}
n^{r-1}< \ldots < n^1< \ldots < 1^{r-1}< \ldots < 1^1 < 0 < 1 < \ldots < n 
\end{equation}
on the set $\{ 0, 1,\ldots, n, 1^1,\ldots, n^1,\ldots, 1^{r-1},\ldots, n^{r-1}\}$ of colored integers.
\end{definition}

The following characterization of the {\em length} of $\g=\s(1)^{c_1}\cdots\s(n)^{c_n} \in G(r,n)$ is well-known (see e.g., \cite{re93, stein94}, \cite[Theorem 4.3]{ba04})
\begin{equation}\label{d:length}
\ell_G(\g)=\inv(\g)+\sum_{c_i\neq 0} \left(\s(i) +c_i-1\right),
\end{equation}
where the {\em inversion number} is defined by
\begin{eqnarray*}
\inv(\g) & := & |\{ (i,j) \in [n]\times [n] \mid \g(i)>\g(j) \}|.
\end{eqnarray*}
The generating function for the length is given by
\begin{equation}\label{gf:lengthG}
\sum_{\g \in G(r,n)} p^{\ell_G(\g)}=[n]_p! (-p[r-1]_p;p)_n=[\hat{n}]_{[r-1]_p,p}!.
\end{equation}
\smallskip

The {\em descent set} of $\g \in G(r,n)$ is defined by 
\begin{eqnarray}\label{des}
\Des_G(\g)
&:=&\{i \in [0,n-1] \mid \g(i) > \g(i+1)\},
\end{eqnarray}
where  $\g(0):=0$, and its cardinality is denoted by $\des_G(\g)$.  Note that 
$0\in \Des_G(\g)$ if and only if $c_{1}(\g)>0$.
\smallskip

As usual the {\em major index}  is defined  to be the sum of descent positions:
$$ \maj(\g)=\sum_{i\in \Des_G(\g)}i,$$
and the \emph{flag-major index} (see \cite{ar01}) is defined by $$\fmaj(\g):=r \cdot \maj(\g)+ \col(\g).$$
\smallskip
For example, for  $\g=[4^1, 3, 2^4, 1^2] \in G(5,4)$ we have $\inv(\g)=2$, 
$\ell_{G}(\g)=13$,  $\Des_{G}(\g)=\{0,1\}$, $\des_{G}(\g)=2$, $\maj(\g)=2$, and $\fmaj(\g)=17$.

\section{Encoding colored sequences}
\label{bijection1}

In this section  we collect some notions
and results that  will be needed in the rest of this paper.
First of all
we generalize Garsia-Gessel and Reiner's method of
encoding sequences and signed sequences (see \cite[\S 1]{gg79} and  \cite[\S 2]{re93}) to colored sequences. 

\medskip

Let ${\mathcal P}_n$ be the set of nondecreasing sequences of nonnegative integers $(\lambda_1, \lambda_2, \ldots, \lambda_n)$, that is partitions of length less than or equal to $n$. 
\smallskip

Define by $\mathbb{N}^{(r,n)}$ the set of $n$-tuples $f=(f_1^{c_1},\ldots,f_n^{c_n})$ of colored integers, where $f_i \in \NN$, and $c_i \in [0,r-1]$. We will mostly consider its subset 
$$\CS:=\left\{(f_1^{c_1},\ldots,f_n^{c_n})\mid f_i \in \NN, c_i \in [0,r-1] : \ {\rm if} \ f_i=0 \ {\rm then} \ c_i=0\right\}.$$

When it is not clear from the context, we will denote $c_i$ by $c_i(f)$. For $f= (f_1^{c_1},\ldots,f_n^{c_n}) \in \mathbb{N}^{(r,n)}$, we define
\begin{align}\label{d:stat1}
\max(f):=\max_{i \in [n]}\{f_i\} \quad {\rm and} \quad |f|:=\sum_{i=1}^n f_i.
\end{align}

\begin{definition}[The colored permutation $\pi(f)$]\label{pif} 
Given a colored sequence $f=(f_1^{c_1},\ldots,f_n^{c_n}) \in \CS$, we construct the colored permutation $\pi(f)$ as follows. For $\nu \in \NN$, we first define the sets 
\begin{equation}\label{Anu}
A_\nu:=\{i^{c_i} \mid f_i =\nu \ {\rm and} \ c_i=c_i(f)\},
\end{equation}
and arrange the entries in each nonempty ``bloc'' $A_\nu$ in increasing order (cf. (\ref{order})), obtaining $\uparrow \! A_\nu$. Then 
$$
\pi(f):=[\uparrow \! A_{\nu_{1}}, \uparrow \!A_{\nu_{2}}, \ldots,  \uparrow\! A_{\nu_{k}}]
$$ 
is obtained by juxtaposing the entries of the nonempty blocs  $\uparrow\! A_{\nu_{1}}, \uparrow\! A_{\nu_{2}}, \ldots$, where
$\nu_{1}<\nu_{2}<\cdots$.
\end{definition}

\begin{example}\label{ex-pif} Let  $f=(4^2,4^1,1,3^3,6,3^1,4^2)\in \NN^{(4,7)}_0$. Then 
$A_1=\{3\}, A_3=\{4^3,6^1\}, A_4=\{1^2,2^1,7^2\}$, and $A_6=\{5\}.$ Hence $\uparrow\! A_1=\{3\}, \uparrow\!A_3=\{6^1,4^3\}, \uparrow\! A_4=\{7^2,2^1,1^2\}$, $\uparrow\! A_6=\{5\}$, and
$$\pi(f)=[\uparrow \! A_1, \uparrow \!A_3, \uparrow\! A_4, \uparrow\! A_6]=[3,6^1,4^3,7^2,2^1,1^2,5] \in G(4,7).$$
\end{example}

The following equivalent characterization of the colored permutation $\pi(f)$ can be derived from Definition~\ref{pif}, and will be
frequently  used along the paper. 

\begin{proposition}\label{def:bijection}
Let  $f=(f_1^{c_1},\ldots,f_n^{c_n}) \in \CS$. Then $\pi(f)=\gamma=(c_1,\ldots,c_n;\sigma)$ is the unique colored permutation satisfying: 
\begin{enumerate}
\item[1)] $f_{\s(1)}\leq f_{\s(2)}\leq \ldots \leq f_{\s(n)}$;
\item[2)] $c_i(\g)=c_{\s(i)}(f)$;
\item[3)] If  $f_{\s(i)}=f_{\s(i+1)}$, then $\g(i)<\g(i+1)$.
\end{enumerate}
\end{proposition}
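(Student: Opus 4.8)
The plan is to establish the equivalence between Definition~\ref{pif} and the three listed conditions by showing that the construction of $\pi(f)$ produces a colored permutation satisfying 1)--3), and conversely that these three conditions pin down a unique $\gamma \in G(r,n)$. First I would unwind the definition of $\pi(f)$: the blocs $A_\nu$ partition the index set $[n]$ according to the value $f_i$, and the colored permutation is read off by listing the blocs in increasing order of $\nu$, sorting within each bloc by the order \eqref{order}. Writing $\pi(f)=(c_1,\ldots,c_n;\sigma)$, the key observation is that $\sigma$ is precisely the permutation that sorts the sequence $(f_1,\ldots,f_n)$ into weakly increasing order, with ties broken by the colored order, and that the color attached to the $i$-th position of $\pi(f)$ is the color of the entry that lands there, namely $c_{\sigma(i)}(f)$.

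Concretely, I would verify each condition in turn. For 1), since the blocs are juxtaposed in order $\nu_1 < \nu_2 < \cdots$ and an entry $j$ sits in bloc $A_{f_j}$, the absolute values appearing in position $i$ of $\pi(f)$ have $f$-values that are weakly increasing in $i$; this is exactly $f_{\sigma(1)} \le \cdots \le f_{\sigma(n)}$. For 2), the color recorded in the $i$-th window position is carried along with the absolute value $\sigma(i)$ when its index is placed, which gives $c_i(\gamma) = c_{\sigma(i)}(f)$ directly from the definition of $A_\nu$ in \eqref{Anu}. For 3), when $f_{\sigma(i)} = f_{\sigma(i+1)}$ the two entries lie in the same bloc $A_\nu$, and since that bloc is arranged in increasing order $\uparrow\!A_\nu$ according to \eqref{order}, we get $\gamma(i) < \gamma(i+1)$.

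For uniqueness, I would argue that conditions 1)--3) determine $\gamma$ completely. Condition 2) fixes the colors once $\sigma$ is known, so it suffices to show $\sigma$ is forced. Condition 1) requires $\sigma$ to sort $f$ weakly increasingly; the only freedom is the relative order of indices sharing a common $f$-value, i.e.\ within a single bloc. Condition 3) removes this freedom: among indices with equal $f$-value, the colored entries $\gamma(i)$ must be strictly increasing in the order \eqref{order}, and since \eqref{order} is a total order on colored integers, there is exactly one way to arrange them. Hence $\sigma$, and therefore $\gamma$, is unique.

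The main obstacle, though it is more bookkeeping than conceptual, is reconciling the two indexing conventions: Definition~\ref{pif} records data position-by-position in the window of $\pi(f)$, whereas the proposition phrases everything through $\sigma$ and the original indices of $f$. I would need to be careful that ``the entry in position $i$ of $\pi(f)$ has absolute value $\sigma(i)$, color $c_i(\gamma)$, and that $c_i(\gamma)$ equals the color $c_{\sigma(i)}(f)$ originally attached to index $\sigma(i)$ in $f$'' is stated consistently, since a sign or index slip here would scramble conditions 2) and 3). Once the dictionary between the bloc construction and the triple $(\text{sorted } f\text{-values}, \sigma, \text{colors})$ is fixed, all three verifications and the uniqueness argument are routine.
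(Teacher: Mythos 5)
Your proposal is correct, and it is exactly the routine verification the paper intends: the paper states Proposition~\ref{def:bijection} without proof, remarking only that it ``can be derived from Definition~\ref{pif},'' and your argument (existence by reading conditions 1)--3) off the bloc construction, uniqueness by noting that condition 1) forces $\sigma$ up to reordering within blocs and condition 3) singles out the unique increasing arrangement of each bloc under the total order \eqref{order}) is precisely that derivation.
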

 
\begin{definition}
Let $\g=(c_1,\ldots, c_n;\s) \in G(r,n)$. We say that a sequence $f \in \CS$ is {\em associated with $\g$} if  $\pi(f)=\g$. 
\end{definition}
We remark that if $f \in \CS$ is associated with $\g$, then by parts 1) and 3) of Proposition~\ref{def:bijection} we have
\begin{equation}\label{pro-ass}
i \in \Des_G(\g) \Longrightarrow f_{\s(i)}<f_{\s(i+1)}.
\end{equation}

\begin{lemma}[The partition $\lambda(f)$]\label{laf} 
Given a colored sequence $f=(f_1^{c_1},\ldots,f_n^{c_n}) \in \CS$, define  
\begin{equation}\label{etiquette}
\lambda_i:=f_{\s(i)}-|\{j\in {\Des}_G(\g) \mid j\leq i-1\}|\quad\text{for} \quad
1\leq i\leq n,
\end{equation}
where $\pi(f)=\g=(c_1,\ldots,c_n;\sigma)$. Then the sequence $\la(f):=(\la_1,\ldots,\la_n)$ is a partition.
\end{lemma}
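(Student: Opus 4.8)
The plan is to verify the two defining properties of a partition separately: that $\la(f)=(\la_1,\ldots,\la_n)$ is weakly increasing, and that its entries are nonnegative. Since the sequence will turn out to be weakly increasing, nonnegativity of the whole vector reduces to nonnegativity of its first entry $\la_1$.

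For monotonicity, I would introduce the abbreviation $d_i:=|\{j\in\Des_G(\g)\mid j\leq i-1\}|$, so that $\la_i=f_{\s(i)}-d_i$ and $d_{i+1}-d_i=\chi(i\in\Des_G(\g))$. Computing the consecutive difference then gives
\[
\la_{i+1}-\la_i=\bigl(f_{\s(i+1)}-f_{\s(i)}\bigr)-\chi(i\in\Des_G(\g)).
\]
By part 1) of Proposition~\ref{def:bijection} the sequence $f_{\s(1)},\ldots,f_{\s(n)}$ is weakly increasing, so the first term is always $\geq 0$. If $i\notin\Des_G(\g)$ the indicator vanishes and the difference is nonnegative. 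If $i\in\Des_G(\g)$, then the implication \eqref{pro-ass} forces the strict inequality $f_{\s(i)}<f_{\s(i+1)}$, whence $f_{\s(i+1)}-f_{\s(i)}\geq 1$, exactly cancelling the $-1$ contributed by the indicator. In both cases $\la_{i+1}\geq\la_i$.

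For nonnegativity it then suffices to check $\la_1=f_{\s(1)}-\chi(0\in\Des_G(\g))\geq 0$. If $0\notin\Des_G(\g)$, then $\la_1=f_{\s(1)}\geq 0$ since $f_{\s(1)}\in\NN$. If $0\in\Des_G(\g)$, then by the remark that $0\in\Des_G(\g)$ if and only if $c_1(\g)>0$ together with part 2) of Proposition~\ref{def:bijection} we get $c_{\s(1)}(f)=c_1(\g)>0$; the defining condition of $\CS$ (if $f_i=0$ then $c_i=0$) then forces $f_{\s(1)}\geq 1$, so $\la_1=f_{\s(1)}-1\geq 0$.

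The genuinely delicate point, and the only place where one really needs $f\in\CS$ rather than merely $f\in\NN^{(r,n)}$, is this last step: a descent occurring at position $0$ must be compensated by a strictly positive value $f_{\s(1)}$, which is guaranteed precisely by the rule that a zero entry must carry color zero. Everything else is a routine telescoping argument, so the proof is complete once these two families of inequalities are established.
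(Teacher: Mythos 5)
Your proof is correct and follows essentially the same route as the paper's: the telescoping computation $\la_{i+1}-\la_i=f_{\s(i+1)}-f_{\s(i)}-\chi(i\in\Des_G(\g))$ combined with the implication \eqref{pro-ass} for descent positions, and the observation that $0\in\Des_G(\g)$ forces $c_{\s(1)}(f)>0$ and hence $f_{\s(1)}\geq 1$ by the defining condition of $\CS$. Your write-up is merely a bit more explicit than the paper's in separating the two cases for $\la_1$ and in flagging where membership in $\CS$ is genuinely used.
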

\begin{proof} If $0 \in {\Des}_G(\g)$ then $c_1(\g)>0$. Since $c_1(\g)=c_{\s(1)}(f)$ and $f \in \CS$, this implies that $f_{\s(1)} > 0$. Hence $\lambda_1\geq 0$. If $i > 1$, 
$$\lambda_{i+1}-\lambda_i = f_{\s(i+1)}-f_{\s(i)}-\chi(i\in \Des_G(\g)).$$
Since $f$ is associated with $\g$, if $i \in {\Des}_G(\g)$ then $f_{\s(i)}<f_{\s(i+1)}$. Hence $\lambda_{i+1}-\lambda_i \geq 0$ and $\lambda \in \mathcal{P}_n$.
\end{proof}

\begin{example}
Let $f=(4^2,4^1,1,3^3,6,3^1,4^2)\in \NN^{(4,7)}_0$ be the sequence of Example~\ref{ex-pif}. Then $(f_{\s(1)}, \ldots,f_{\s(n)})=(1,3,3,4,4,4,6)$. Since $\Des_G(\pi(f))=\{1,3\}$, we have $\lambda=(1,2,2,2,2,2,4)$.
\end{example}

\begin{definition}[The colored sequence $\lambda^\g$]\label{lambdagamma}
Given a partition $\lambda \in \mathcal{P}_n$ and a colored permutation $\g=({c_1},\ldots, {c_n}; \sigma)\in G(r,n)$ we denote by $\lambda^\g$ the following colored sequence
\begin{align}\label{lagamma}
\lambda^\g:=(\lambda_{\sigma(1)}^{c_1}, \ldots, \lambda_{\sigma(n)}^{c_n})\in \mathbb{N}^{(r,n)}.
\end{align}
\end{definition}

\begin{proposition}\label{l:bijection}
The map $f \mapsto (\pi(f), \lambda(f))$ is a bijection between $\CS$ and $G(r,n)\times \mathcal{P}_n$. Moreover,  if $\g:=\pi(f)$ and $\lambda:=\lambda(f)$ then
\begin{align}
 \max(f)&=\max(\lambda)+\des_G(\g), \label{eq:1} \\
 |f|&=|\lambda|+n\des_G(\g) -\maj(\g).\label{eq:2}
\end{align}
\end{proposition}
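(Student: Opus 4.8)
The plan is to construct an explicit inverse to the map $f\mapsto(\pi(f),\lambda(f))$ and then read both identities directly off the inversion formula. Abbreviate $d_i:=|\{j\in\Des_G(\g)\mid j\le i-1\}|$, so that by \eqref{etiquette} we have $\lambda_i=f_{\s(i)}-d_i$. I would define the candidate inverse $\Phi\colon G(r,n)\times\mathcal{P}_n\to\CS$ by sending $(\g,\lambda)$, with $\g=(c_1,\dots,c_n;\s)$, to the colored sequence $f$ determined by $f_{\s(i)}:=\lambda_i+d_i$ and $c_{\s(i)}(f):=c_i(\g)$ for $1\le i\le n$; since $\s$ is a bijection of $[n]$ this determines $f$ completely. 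Writing $g_i:=\lambda_i+d_i=f_{\s(i)}$ and noting $d_{i+1}-d_i=\chi(i\in\Des_G(\g))$, I get
$$g_{i+1}-g_i=(\lambda_{i+1}-\lambda_i)+\chi(i\in\Des_G(\g))\ge0,$$
so $g_1\le\cdots\le g_n$, with strict inequality whenever $i\in\Des_G(\g)$.

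The hard (and only nonformal) step is checking that $\Phi$ actually lands in $\CS$, i.e.\ that $g_i=0$ forces $c_i(\g)=0$; this is where the order \eqref{order} enters. If $g_i=0$ then, since the $g_j$ are nonnegative and nondecreasing, $g_1=\cdots=g_i=0$, and the displayed inequality forces $0,1,\dots,i-1\notin\Des_G(\g)$. Now $0\notin\Des_G(\g)$ gives $c_1(\g)=0$, so $\g(1)$ is uncolored, while $1,\dots,i-1\notin\Des_G(\g)$ gives $\g(1)<\g(2)<\cdots<\g(i)$. Since in \eqref{order} every uncolored letter is larger than every colored one, any letter exceeding an uncolored letter is itself uncolored; inducting along the chain shows $\g(1),\dots,\g(i)$ are all uncolored, so in particular $c_i(\g)=0$, as required.

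With $\Phi$ well defined, I would verify it is a two-sided inverse. That $\pi(\Phi(\g,\lambda))=\g$ follows from the uniqueness in Proposition~\ref{def:bijection}: condition~(1) is the monotonicity of the $g_i$, condition~(2) holds by construction, and condition~(3) holds because $g_i=g_{i+1}$ forces $i\notin\Des_G(\g)$, hence $\g(i)<\g(i+1)$. Using $\pi(\Phi(\g,\lambda))=\g$ one then gets $\lambda(\Phi(\g,\lambda))_i=g_i-d_i=\lambda_i$, so $\lambda(\Phi(\g,\lambda))=\lambda$. Conversely $\Phi(\pi(f),\lambda(f))=f$, because the defining relations $f_{\s(i)}=\lambda_i+d_i$ and $c_{\s(i)}(f)=c_i(\g)$ invert \eqref{etiquette} and part~(2) of Proposition~\ref{def:bijection} termwise; well-definedness of the forward map is already guaranteed by Lemma~\ref{laf}.

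Finally the two identities drop out of the formula $f_{\s(i)}=g_i=\lambda_i+d_i$. Since the $g_i$ are nondecreasing, $\max(f)=g_n=\lambda_n+d_n$; here $\lambda_n=\max(\lambda)$ and $d_n=|\Des_G(\g)|=\des_G(\g)$ (all descents lie in $[0,n-1]$), which is \eqref{eq:1}. For \eqref{eq:2}, summing gives $|f|=\sum_i g_i=|\lambda|+\sum_i d_i$, and exchanging the order of summation yields
$$\sum_{i=1}^n d_i=\sum_{j\in\Des_G(\g)}(n-j)=n\,\des_G(\g)-\maj(\g).$$
I expect the membership-in-$\CS$ argument above to be the crux; everything else is bookkeeping.
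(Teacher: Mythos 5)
Your proof is correct and is essentially the same as the paper's: your map $\Phi$ is exactly the paper's inverse $f:=\mu^{\tilde{\g}^{-1}}$ with $\mu_i=\lambda_i+d_i$ written coordinatewise, the membership-in-$\CS$ argument via the order \eqref{order} matches the paper's (you just spell out the induction it leaves implicit), and the two identities are derived by the same telescoping and summation exchange. The only difference is cosmetic: you also verify the composition $\Phi(\pi(f),\lambda(f))=f$ explicitly, a step the paper treats as immediate.
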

\begin{proof} To see that this map is a bijection, we construct its inverse. Starting from $\g=(c_1,\ldots,c_n;\s)$ and $\lambda$ in $G(r,n) \times {\mathcal P}_n$, we denote by
$\mu=(\mu_1,\ldots,\mu_n)$ the partition 
\begin{equation}\label{def:mu}
\mu_i:= \lambda_i + |\{j \in {\Des}_G(\g) \mid j\leq i-1\}|.
\end{equation}
Then 
we define  $f=(f_1^{c_1},\ldots,f_n^{c_n}) \in \mathbb{N}^{(r,n)}$  by letting 
\begin{eqnarray}\label{f-1}
f:=\mu^{\tilde{\g}^{-1}} \quad {\rm that \ is,  \ for \ each} \ i \in [n] \quad 
\left\{ \begin{array}{ll} 
f_i =\mu_{\s^{-1}(i)} \\
c_i(f)=c_{\s^{-1}(i)}(\g). \end{array}\right.
\end{eqnarray}
Let us prove that the image of $f$ by this map is $(\g,\la)$.
 We have $\pi(f)=\g$ because $\g$ satisfies the three conditions of Proposition~\ref{def:bijection}:
\begin{itemize}
\item[1)]  since $f_{\sigma(i)}=\mu_{i}$ then $(f_{\s(1)},\ldots,f_{\s(n)})=\mu$ is nondecreasing; 
\item[2)]  $c_i(f)=c_{\s^{-1}(i)}(\g)\Longrightarrow c_{\s(i)}(f)=c_i(\g)$;
\item[3)]   if $f_{\s(i)}=f_{\s(i+1)}$ then $\mu_i=\mu_{i+1}$, and (\ref{def:mu}) implies that $i \not \in \Des_G(\g)$. 
\end{itemize}
It is clear that  $\la(f)=\la$. 
To see that $f \in \CS$, we  need to show  that $\mu_i=0$ implies $c_i(\g)=0$.
 If $\mu_i=0$, then by (\ref{def:mu}) $\la_i=0$ and $j \not\in \Des_G(\g)$ for all $j \leq i-1$. 
 In particular $0$ is not a descent of $\g$. Hence $c_1(\g)=\ldots=c_i(\g)=0$.
\smallskip

Equation~(\ref{eq:1}) follows from $\max(f)=f_{\s(n)}=\la_{\s(n)}+\des_G(\g)$. Since
$$ 
|f|-|\la|=\sum_{i=1}^n|\{j\in {\Des}_G(\g)|j\leq i-1\}|=\sum_{j\in {\Des}_G(\g)}(n-j)=n\des_G(\g)-\maj(\g),
$$
we have Equation~(\ref{eq:2}).
\end{proof}

\begin{example} This is an example of the construction in the above proof. Let $(\g,\lambda)$ be the pair
$$([5^1,3^1,1,2^2,4^2],(0,2,2,3,3))\in G(3,5)\times {\mathcal P}_5.$$
We find $\Des_G(\g)=\{0,1,3,4\}$, so $\mu=(1,4,4,6,7).$ 
Now $\tilde{\g}^{-1}=[3,4^2,2^1,5^2,1^1]$ and 
$$f:=\mu^{\tilde{\g}^{-1}}=(4,6^2,4^1,7^2,1) \in \NN^{(3,5)}_0.$$
\end{example}

\begin{definition}\label{gamma-compatible}
Analogously to the case of the symmetric group \cite[(2.12)]{gg79}, we say that a partition $\lambda$ is {\em $\g$-compatible} if $\lambda_i<\lambda_{i+1}$ for all $i \in \Des_G(\g)$, where we let $\la_0:=0$. 
\end{definition}
\noindent Notice that if $\lambda$ is $\g$-compatible and $0 \in \Des_G(\g)$, then $\la_1\geq 1$.
\smallskip

The following result gives a relation between $\g$-compatible partitions and sequences associated with $\g$, which will be used in the proof of Theorem~\ref{5stat}.

\begin{proposition}\label{compatible-associated}
Let $\mu \in \mathcal{P}_n$, and $\g=(c_1,\ldots,c_n;\s) \in G(r,n)$. Then 
$$\mu \ \mbox{ is} \ \mbox{ $\g$-compatible} \Longleftrightarrow \pi(\mu^{\tilde{\g}^{-1}})=\g.$$
\end{proposition}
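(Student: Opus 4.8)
The plan is to prove the biconditional by unwinding the two competing combinatorial descriptions of the descent positions and comparing them. The key object is the partition $\mu$ together with the shifted partition whose $i$-th part is $\mu_i + |\{j \in \Des_G(\g) \mid j \leq i-1\}|$; call this $\nu$. The reason $\nu$ is the right auxiliary object is that it is precisely the partition appearing in equation (\ref{def:mu}) of the proof of Proposition~\ref{l:bijection}: if we temporarily set $\lambda := \mu$ in that construction and form $f := \nu^{\tilde{\g}^{-1}}$, then that proof shows $\pi(f) = \g$ \emph{provided} $f$ genuinely lands in $\CS$ and $\nu$ is genuinely nondecreasing. So the strategy is to track exactly which of those two caveats can fail and show they fail precisely when $\mu$ is not $\g$-compatible.

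For the forward direction, I would assume $\mu$ is $\g$-compatible and check the three conditions of Proposition~\ref{def:bijection} for $f = \mu^{\tilde{\g}^{-1}}$, where by (\ref{lagamma}) we have $f_{\s(i)} = \mu_i$ and $c_i(f) = c_{\s(i)}(\g)$ after reindexing via the skew inverse. Condition 2) is immediate from the definition of $\lambda^\g$. For conditions 1) and 3), the point is that $f_{\s(i)} = \mu_i$, so I must show $\mu_1 \leq \mu_2 \leq \cdots \leq \mu_n$ (which holds since $\mu \in \mathcal{P}_n$), and that $f_{\s(i)} = f_{\s(i+1)}$ forces $i \notin \Des_G(\g)$. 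The contrapositive of the latter is: $i \in \Des_G(\g) \Rightarrow \mu_i < \mu_{i+1}$, which is exactly the definition of $\g$-compatibility. One must also verify $f \in \CS$, i.e.\ that $f_{\s(i)} = \mu_i = 0$ forces $c_i(\g) = 0$; here I would use the remark after Definition~\ref{gamma-compatible} that $0 \in \Des_G(\g)$ implies $\mu_1 \geq 1$, together with the fact that $c_1(\g) > 0 \Leftrightarrow 0 \in \Des_G(\g)$, to handle the $\mu_1 = 0$ case, and argue that any later zero part would contradict monotonicity if a positive color demanded a descent.

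For the reverse direction I would argue by contrapositive: assume $\mu$ is \emph{not} $\g$-compatible, so there is some $i \in \Des_G(\g)$ with $\mu_i = \mu_{i+1}$ (using $\mu_0 = 0$, so the $i=0$ case says $c_1(\g) > 0$ while $\mu_1 = 0$). Then for $f = \mu^{\tilde{\g}^{-1}}$ we have $f_{\s(i)} = f_{\s(i+1)}$ but $i \in \Des_G(\g)$, so by (\ref{des}) the colored permutation $\g$ places $\g(i) > \g(i+1)$, violating condition 3) of Proposition~\ref{def:bijection}; for the boundary case $\mu_1 = 0$ with $c_1(\g) > 0$ the sequence $f$ fails to lie in $\CS$. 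In either case $\pi(\mu^{\tilde{\g}^{-1}}) \neq \g$, since $\pi$ always produces a genuine element of $\CS$ satisfying all three conditions.

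The main obstacle I anticipate is the careful bookkeeping of the index $0$ and the color constraint, rather than any deep idea: the definition of $\Des_G$ treats $\g(0) := 0$ specially, and $\g$-compatibility encodes $0 \in \Des_G(\g)$ via the convention $\lambda_0 := 0$, so the equivalence between ``$\mu$ strictly increases across position $0$'' and ``$f$ respects the color-zero condition of $\CS$'' must be matched exactly. The cleanest route is probably to observe that the construction $\mu \mapsto \nu \mapsto \mu^{\tilde{\g}^{-1}}$ is literally the inverse map from the proof of Proposition~\ref{l:bijection} specialized so that the shift partition coincides with $\mu$ itself, and then to note that that proof already verified conditions 1)–3) and membership in $\CS$ \emph{under} the hypothesis that $\mu$ is nondecreasing with the requisite strict jumps — which is $\g$-compatibility — so much of the work can be cited rather than repeated.
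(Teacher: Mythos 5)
Your proposal is correct and follows essentially the same route as the paper's own proof: both directions amount to checking the three conditions of Proposition~\ref{def:bijection} for $f=\mu^{\tilde{\g}^{-1}}$ (with $\g$-compatibility supplying exactly condition 3), and the boundary case $0\in\Des_G(\g)$ handled via membership in $\CS$), your contrapositive phrasing of the converse being the same argument as the paper's direct one, and your explicit verification that $f\in\CS$ in the forward direction being a point the paper actually glosses over. One small correction to your framing paragraphs: the auxiliary partition $\nu_i=\mu_i+|\{j\in\Des_G(\g)\mid j\leq i-1\}|$ points the wrong way, since the relevant specialization of the inverse map of Proposition~\ref{l:bijection} is obtained by taking $\lambda_i:=\mu_i-|\{j\in\Des_G(\g)\mid j\leq i-1\}|$ (which is a partition precisely when $\mu$ is $\g$-compatible), so that the shifted partition of that construction equals $\mu$ itself and the resulting sequence is $\mu^{\tilde{\g}^{-1}}$ rather than $\nu^{\tilde{\g}^{-1}}$.
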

\begin{proof}
To simplify the notation, as in (\ref{f-1}), we let $f:=\mu^{\tilde{\g}^{-1}}$ within this proof. 

Suppose that $\mu$ is $\g$-compatible. We show that $\g$ verifies all three conditions of Proposition~\ref{def:bijection}. The first one is clear since $\mu_i=f_{\s(i)}$, and $\mu$ is a partition. Secondly, by definition (see (\ref{skew})) $c_{\s^{-1}(i)}(\g)=c_i(\tilde{\g}^{-1})$. Hence  $c_i(\g)=c_{\s(i)}(\tilde{\g}^{-1})=c_{\s(i)}(f)$, where the last equality follows from Definition~\ref{lambdagamma}. Now suppose that $f_{\s(i)}=f_{\s(i+1)}$, that is $\mu_i=\mu_{i+1}$. The $\g$-compatibility of $\mu$ implies that $\g(i)<\g(i+1)$, so the third condition.

Conversely, if $\pi(f)=\g$ then condition 3) of Proposition~\ref{def:bijection} implies that $\mu$  is $\g$-compatible, since $\mu_i=f_{\s(i)}$, for every $i \in [n]$. In the case $0 \in \Des_G(\g)$, we let $\s(0)=0$, and $f_0=0$, as in Definition~\ref{gamma-compatible}.
\end{proof}

The next result will be a basic tool in the proofs of Theorem~\ref{eq:wreath} and Theorem~\ref{5stat}.
\begin{proposition}\label{desmaj}
Let $\g \in G(r,n)$. Then
\begin{align}\label{caseB}
\sum_{f \in \CS \mid \pi(f)=\g} t^{\max(f)}q^{\max(f)\cdot n-|f|}
=\frac{t^{\des_G(\g)}q^{\maj(\g)}}{(t;q)_n}.
\end{align}
\end{proposition}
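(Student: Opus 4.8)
The plan is to use the bijection of Proposition~\ref{l:bijection} to reduce the sum over the fiber $\{f \in \CS \mid \pi(f) = \g\}$ to a sum over partitions. Since $\pi$ is fixed to equal $\g$, as $f$ ranges over this fiber the associated partition $\lambda(f)$ ranges over \emph{all} of $\mathcal{P}_n$, by the bijectivity statement in Proposition~\ref{l:bijection}. The key is then to substitute the two identities \eqref{eq:1} and \eqref{eq:2} into the summand. Writing $\lambda := \lambda(f)$, I would rewrite the exponent of $t$ as $\max(f) = \max(\lambda) + \des_G(\g)$ and the exponent of $q$ as
\begin{align*}
\max(f)\cdot n - |f| &= n\bigl(\max(\lambda)+\des_G(\g)\bigr) - \bigl(|\lambda| + n\des_G(\g) - \maj(\g)\bigr)\\
&= n\max(\lambda) - |\lambda| + \maj(\g).
\end{align*}
The terms $n\des_G(\g)$ cancel, and the factor $t^{\des_G(\g)}q^{\maj(\g)}$ pulls out of the sum.

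This leaves the purely partition-theoretic sum $\sum_{\lambda \in \mathcal{P}_n} t^{\max(\lambda)} q^{n\max(\lambda)-|\lambda|}$, and the whole problem reduces to showing this equals $1/(t;q)_n$. I would handle this by stratifying $\mathcal{P}_n$ according to the value $m := \max(\lambda)$. For fixed $m$, writing $\mu_i := m - \lambda_{n+1-i}$ gives a bijection onto partitions with at most $n$ parts each of size at most $m$ (the complementary partition inside an $n \times m$ box), under which $n\max(\lambda) - |\lambda| = nm - |\lambda| = |\mu|$ and $\max(\lambda) = m$. Hence the inner sum over partitions of height exactly $m$ contributes $t^m$ times the generating function $\sum_{\mu \subseteq n \times m} q^{|\mu|}$, but it is cleaner to sum over partitions fitting in a box of height $\le m$ and use the telescoping of the generating function in $t$.

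Concretely, I would invoke the standard identity
$$
\sum_{\lambda \in \mathcal{P}_n} t^{\max(\lambda)} q^{n\max(\lambda)-|\lambda|} = \frac{1}{(t;q)_n} = \prod_{i=0}^{n-1}\frac{1}{1-tq^i},
$$
which is a classical $q$-series evaluation: a partition $\lambda = (\lambda_1 \le \cdots \le \lambda_n)$ is encoded by the gaps $d_i := \lambda_{i+1} - \lambda_i \ge 0$ (with $d_0 := \lambda_1$), and one checks that $\max(\lambda) = \sum_{i=0}^{n-1} d_i$ while $n\max(\lambda)-|\lambda| = \sum_{i=0}^{n-1} i\, d_i$, so the sum factors as $\prod_{i=0}^{n-1}\sum_{d_i \ge 0}(tq^i)^{d_i} = \prod_{i=0}^{n-1}(1-tq^i)^{-1}$. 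This geometric-series factorization is the computational heart of the argument.

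The main obstacle I anticipate is purely bookkeeping rather than conceptual: I must be careful with the indexing convention for partitions, since $\mathcal{P}_n$ consists of \emph{nondecreasing} sequences, so $\max(\lambda) = \lambda_n$ and the differences $d_i = \lambda_{i+1}-\lambda_i$ are the right variables to separate, with the weight $n\max(\lambda)-|\lambda| = \sum_{i}(n-i)\lambda_i - \cdots$ needing the telescoping re-summation to land on the clean form $\sum_i i\,d_i$. Once the correct change of variables to independent gap statistics is set up, the factorization into a product of geometric series is immediate and yields $1/(t;q)_n$; combined with the extracted factor $t^{\des_G(\g)}q^{\maj(\g)}$ this is exactly \eqref{caseB}.
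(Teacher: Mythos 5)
Your proof is correct and follows essentially the same route as the paper: both reduce the fiber sum to a sum over $\mathcal{P}_n$ via the bijection of Proposition~\ref{l:bijection} together with \eqref{eq:1} and \eqref{eq:2}, and then evaluate a standard partition generating function. The only immaterial difference is that the paper first computes $\sum_{f} t^{\max(f)}q^{|f|} = t^{\des_G(\g)}q^{n\des_G(\g)-\maj(\g)}/(tq;q)_n$ and then substitutes $q \leftarrow q^{-1}$, $t \leftarrow tq^{n}$, whereas you evaluate the weight $t^{\max(\lambda)}q^{n\max(\lambda)-|\lambda|}$ directly by the gap-variable factorization into geometric series.
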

\begin{proof} It follows from the bijection in Proposition~\ref{l:bijection}, (\ref{eq:1}), and (\ref{eq:2}) that 
\begin{align}
\sum_{f\in \CS \mid \pi(f)=\g}
t^{\max(f)}q^{|f|}&=\sum_{\lambda \in \mathcal{P}_n}t^{\max(\lambda)+\des_G(\g)}q^{|\lambda|+n\des_G(\g)-\maj(\g)}\nonumber \\
&=\frac{t^{\des_G(\g)}q^{n\des_G(\g)-\maj(\g)}}{(tq;q)_n}. \label{seq}
\end{align}
By replacing $q$ by $q^{-1}$ and $t$ by $tq^n$ in (\ref{seq}) we get the result.
\end{proof}

Setting $r=2$ we have $G(2,n)=B_n$ the hyperoctahedral group or group of signed permutations. 
As usual, the weight color will be  denoted by $\nneg$, the number of negative entries,
and  the descent set by $\Des_{B}$. \\
There is a natural projection of $G(r,n)$ into $B_n$ for every $r\geq 2$, which ``forgets the colors". 
\begin{definition}[Projection of $G(r,n)$ onto $B_n$]\label{projection}
We define
$\phi: G(r,n) \rightarrow B_n$ by sending $\g=({c_1},\ldots,{c_n}; \s)$ to 
$$
\phi(\g)=(\hat{c}_1,\ldots,{\hat{c}_n; \s)},\qquad\textrm{where}\qquad
\hat{c}_i:=\left\{\begin{array}{ll} 0 \ , & {\rm if} \quad c_i=0; \\
                                                      1 \ , & {\rm if} \quad c_i \geq 1.
                                                      \end{array}\right.
$$
It is easy to see that $\phi$ is not a group homomorphism, and that $\phi(\g^{-1})=\phi(\g)^{-1}$. The following lemma, relating the statistics of $G(r,n)$ with those of $B_n$, refines a result of Reiner~\cite[\S 7]{re93}. The proof is similar.
\end{definition}

\begin{lemma}\label{l:projection} We have 
\begin{itemize}
\item[1)] $\Des_G(\g)=\Des_B(\phi(\g))$;
\item[2)] $\Des_G(\g^{-1})=\Des_B(\phi(\g)^{-1})$;
\item[3)]  For any $\hat{\g} \in B_n$ we have 
$$\sum_{\g \in G(r,n) \mid \phi(\g)=\hat{\g}} p^{\ell_G(\g)}a^{\col(\g)}=p^{\ell_B(\hat{\g})}(a[r-1]_{ap})^{\nneg(\hat{\g})}.$$
\end{itemize}
\end{lemma}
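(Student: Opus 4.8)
The plan is to isolate a single comparison principle and deduce all three parts from it. The principle is: for any two positions $i\neq j$ and any $\g\in G(r,n)$, one has $\g(i)>\g(j)$ if and only if $\phi(\g)(i)>\phi(\g)(j)$. First I would observe that $\g$ and $\phi(\g)$ share the same underlying permutation $\sigma$, so the absolute values $|\g(i)|=\sigma(i)$ and $|\g(j)|=\sigma(j)$ are the same for both and, being images of a permutation at distinct positions, are distinct. A short inspection of the order \eqref{order} then shows that when two colored integers have \emph{distinct} absolute values, their relative order is governed only by whether each color is zero or nonzero together with the two absolute values, and never by the precise nonzero color: if both colors vanish they compare as ordinary integers; if exactly one vanishes the uncolored one is larger; and if both are nonzero the one with the smaller absolute value is larger. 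In each case replacing the nonzero colors by $1$ (which is exactly what $\phi$ does) leaves the comparison unchanged.

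Granting this, part 1) follows at once for the consecutive comparisons $i\in[n-1]$, while the boundary index $i=0$ is handled by the observation already recorded after \eqref{des}: $0\in\Des_G(\g)\iff c_1(\g)\neq 0\iff \hat{c}_1\neq 0\iff 0\in\Des_B(\phi(\g))$. Part 2) is then formal: applying part 1) to $\g^{-1}$ gives $\Des_G(\g^{-1})=\Des_B(\phi(\g^{-1}))$, and the identity $\phi(\g^{-1})=\phi(\g)^{-1}$ recorded in Definition~\ref{projection} finishes it.

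For part 3), the same comparison principle applied to all pairs yields $\inv(\g)=\inv(\phi(\g))=\inv(\hat{\g})$ for every $\g$ in the fiber $\phi^{-1}(\hat{\g})$. Writing $\hat{\g}=(\hat{c}_1,\ldots,\hat{c}_n;\sigma)$, such a $\g$ is produced by choosing, independently for each position $i$ with $\hat{c}_i=1$, a nonzero color $c_i\in\{1,\ldots,r-1\}$ (all other colors being forced to $0$). Using the length formula \eqref{d:length} in the form $\ell_G(\g)=\inv(\hat{\g})+\sum_{i:\hat{c}_i=1}(\sigma(i)+c_i-1)$ together with $\col(\g)=\sum_{i:\hat{c}_i=1}c_i$, I would factor the weighted sum over the negative positions as
\[
\sum_{\g:\,\phi(\g)=\hat{\g}} p^{\ell_G(\g)}a^{\col(\g)}
= p^{\inv(\hat{\g})}\prod_{i:\,\hat{c}_i=1}\Bigl(\sum_{c=1}^{r-1}p^{\sigma(i)+c-1}a^{c}\Bigr).
\]

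It then remains to evaluate each inner factor, which is the one genuine computation. Pulling out $p^{\sigma(i)}$ and using $p^{c-1}a^{c}=a(ap)^{c-1}$ turns each factor into $p^{\sigma(i)}\,a\sum_{c=1}^{r-1}(ap)^{c-1}=p^{\sigma(i)}\,a[r-1]_{ap}$. Collecting the $p$-powers gives $p^{\inv(\hat{\g})+\sum_{\hat{c}_i=1}\sigma(i)}$, and since the specialization of \eqref{d:length} to $B_n$ reads $\ell_B(\hat{\g})=\inv(\hat{\g})+\sum_{\hat{c}_i=1}\sigma(i)$ (the terms $\hat{c}_i-1$ vanish), this is precisely $p^{\ell_B(\hat{\g})}(a[r-1]_{ap})^{\nneg(\hat{\g})}$. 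The only delicate point is the comparison principle underlying the invariance of $\Des_G$ and $\inv$ under $\phi$; everything past it is a routine factorization and a finite geometric-series evaluation.
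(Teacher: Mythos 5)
Your proof is correct, and there is essentially nothing in the paper to compare it against: the authors state this lemma without proof, remarking only that it refines a result of Reiner~\cite[\S 7]{re93} and that ``the proof is similar.'' Your argument is therefore a genuine, self-contained substitute, and it is the natural one. The comparison principle you isolate --- that for entries of distinct absolute value the order \eqref{order} depends only on the absolute values and on whether each color is zero or nonzero, never on the precise nonzero color --- is exactly the structural reason $\phi$ preserves all descent and inversion comparisons; it applies to every pair of window positions because the underlying $\sigma$ is a permutation, and the boundary index $i=0$ is correctly handled since $\g(0)=0$ lies above all colored entries and below all uncolored ones, so $0\in\Des_G(\g)$ iff $c_1(\g)\neq 0$ iff $0\in\Des_B(\phi(\g))$. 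Part 2) legitimately invokes $\phi(\g^{-1})=\phi(\g)^{-1}$, which the paper records in Definition~\ref{projection}. In part 3), the description of the fiber (free independent choice of $c_i\in[1,r-1]$ at each position with $\hat{c}_i=1$), the factorization of the weighted sum, the evaluation
$\sum_{c=1}^{r-1}p^{\sigma(i)+c-1}a^{c}=p^{\sigma(i)}\,a[r-1]_{ap}$,
and the identification of $\inv(\hat{\g})+\sum_{i:\hat{c}_i=1}\sigma(i)$ with $\ell_B(\hat{\g})$ as the $r=2$ specialization of \eqref{d:length} are all correct. Where the authors would lean on Reiner's $P$-partition machinery, your direct order-theoretic argument avoids it entirely, which is a small but real gain in self-containedness.
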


It is clear that for the skew inverse we have
\begin{align}\label{phig}
\phi(\g^{-1})=\phi(\tilde{\g}^{-1}).
\end{align}

The next result follows from Proposition~\ref{desmaj} and part 1) of Lemma~\ref{l:projection}.
\begin{corollary}\label{l:desmaj}
Let $\g \in G(r,n)$. Then
\begin{align}
\sum_{f \in \CS \mid \pi(f)=\phi(\g)} t^{\max(f)}q^{\max(f)\cdot n-|f|}=
\sum_{f \in \CS \mid \pi(f)=\g} t^{\max(f)}q^{\max(f)\cdot n-|f|}.
\end{align}
\end{corollary}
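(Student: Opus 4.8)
The statement to prove is Corollary~\ref{l:desmaj}, which asserts that the generating function
\[
\sum_{f \in \CS \mid \pi(f)=\g} t^{\max(f)}q^{\max(f)\cdot n-|f|}
\]
depends on $\g$ only through its projection $\phi(\g) \in B_n$. My plan is to route everything through Proposition~\ref{desmaj}, which already evaluates this sum in closed form. The key observation is that the right-hand side of \eqref{caseB}, namely $t^{\des_G(\g)}q^{\maj(\g)}/(t;q)_n$, is expressed entirely in terms of $\des_G(\g)$ and $\maj(\g)$, and these two quantities are themselves determined by the descent set $\Des_G(\g)$. So the entire burden reduces to showing that $\Des_G(\g)$ is invariant under $\phi$.

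Concretely, the first step is to apply Proposition~\ref{desmaj} to the colored permutation $\g$, giving the equality of the second sum with $t^{\des_G(\g)}q^{\maj(\g)}/(t;q)_n$. The second step is to apply Proposition~\ref{desmaj} again, but now to the signed permutation $\phi(\g) \in B_n = G(2,n)$; since $\phi(\g)$ lives in a wreath product (with $r=2$), Proposition~\ref{desmaj} applies verbatim and evaluates the first sum as $t^{\des_B(\phi(\g))}q^{\maj(\phi(\g))}/(t;q)_n$. Here I am writing $\des_B$ and the major index of $\phi(\g)$ using the $B_n$ conventions, but these are just the $G(r,n)$ statistics specialized to $r=2$. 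The final step is to invoke part 1) of Lemma~\ref{l:projection}, which states $\Des_G(\g)=\Des_B(\phi(\g))$. From the equality of descent sets, both $\des_G(\g)=\des_B(\phi(\g))$ (equal cardinalities) and $\maj(\g)=\maj(\phi(\g))$ (equal sums of descent positions) follow immediately, so the two closed-form expressions coincide and the two sums are equal.

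There is essentially no hard step here: the corollary is a formal consequence of a formula already proved plus a structural lemma already stated. The only point that requires a moment's care is that Proposition~\ref{desmaj} is stated for a generic $G(r,n)$, so I must make explicit that it specializes correctly to $B_n = G(2,n)$ when applied to $\phi(\g)$, and that the statistics $\des_G,\maj$ appearing in \eqref{caseB} become the familiar $B_n$ statistics $\des_B,\maj$ under this specialization. Once that is noted, the proof is a two-line chain of equalities. I would write it as: by Proposition~\ref{desmaj} the right-hand sum equals $t^{\des_G(\g)}q^{\maj(\g)}/(t;q)_n$; applying the same proposition in $G(2,n)$ to $\phi(\g)$ shows the left-hand sum equals $t^{\des_B(\phi(\g))}q^{\maj(\phi(\g))}/(t;q)_n$; and these agree because $\Des_G(\g)=\Des_B(\phi(\g))$ by Lemma~\ref{l:projection}, part 1).
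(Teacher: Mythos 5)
Your proof is correct and is essentially the paper's own argument: the paper derives this corollary precisely from Proposition~\ref{desmaj} combined with part 1) of Lemma~\ref{l:projection}, which is exactly the two-step chain you describe. The one identification you gloss over (applying Proposition~\ref{desmaj} to $\phi(\g)$ inside $G(2,n)$ while the left-hand sum ranges over $f\in\CS$, i.e.\ over $r$-colored sequences) is harmless, since condition 2) of Proposition~\ref{def:bijection} forces any $f$ with $\pi(f)=\phi(\g)$ to have colors in $\{0,1\}$, so the index set is the same whether $\phi(\g)$ is viewed in $G(2,n)$ or in $G(r,n)$ via the natural inclusion $B_n\subseteq G(r,n)$.
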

%

\section{Quotients of $G(r,n)$}

There exists a well-known decomposition of any Coxeter group as the product of a quotient by a parabolic subgroup, see for example \cite[Proposition 2.4.4]{bb05}. The following analogue result for $G(r,n)$ is probably known.
Due to the lack of an adequate reference, we provide a proof.

\smallskip
Let  $S_G:=\{s_{0},\ldots, s_{{n-1}}\}$ be the generating set of $G(r,n)$ defined in (\ref{gen-set}). For 
$J\subseteq [0, n-1]$ we let $G_{J}:=\langle s_{i}\mid i\in J\rangle$ be the {\em parabolic subgroup}
 generated by $J$.
\begin{proposition}
\label{quotient}
 For $J \subseteq  [0, n-1]$ let $G_{J}$ be the  parabolic subgroup  of $G(r,n)$ generated by $J$, and
$$
G^{J} := \{ \tau \in G(r,n) \mid \Des_G(\g) \subseteq [0, n-1]\setminus J\},
$$
the {\em (right) quotient}. Then every $\g \in G(r,n)$ has a unique factorization
$\g=\tau \cdot \delta$, with $\tau \in G^{J}$ and $\delta \in G_{J}$, such that
\begin{equation}\label{length-color}
\ell_G(\g)=\ell_G(\tau)+\ell_G(\delta) \quad {\rm and} \quad  \col(\g)=\col(\tau)+\col(\delta).
\end{equation}
\end{proposition}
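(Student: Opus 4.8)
The plan is to establish the decomposition $\g = \tau \cdot \delta$ by an explicit combinatorial construction, following the standard Coxeter-group strategy (as in \cite[Proposition 2.4.4]{bb05}) but adapted to the colored setting where we must track colors as well as lengths. The key observation is that the parabolic subgroup $G_J = \langle s_i \mid i \in J\rangle$ is itself isomorphic to a product of smaller wreath products: if $0 \in J$, one factor is a full $G(r,m)$ for a consecutive block, and the remaining factors (generated by $s_i$ with $i \geq 1$) are ordinary symmetric groups $S_m$ acting on consecutive blocks determined by the ``gaps'' in $J$. Correspondingly, the quotient $G^J$ consists exactly of those $\g$ whose descent set avoids $J$, i.e.\ those $\g$ that are nondecreasing (in the order \eqref{order}) within each block prescribed by $J$ and have no color in position $1$ when $0 \notin J$.

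First I would define $\delta$ as the unique element of $G_J$ that sorts $\g$ within each block: concretely, restrict $\g$ to each maximal run of indices governed by $J$ and let $\delta^{-1}$ (or $\delta$, depending on side conventions) rearrange the entries of that block into increasing order while respecting the color bookkeeping dictated by whether $0 \in J$. Setting $\tau := \g \cdot \delta^{-1}$, I would check that $\tau \in G^J$ by verifying $\Des_G(\tau) \cap J = \emptyset$, which holds because the sorting has removed precisely the descents at positions in $J$. Uniqueness follows from the standard argument: any other factorization would have to sort the same blocks the same way, since $G^J$-elements are block-increasing and $G_J$-elements permute only within blocks.

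The length additivity $\ell_G(\g) = \ell_G(\tau) + \ell_G(\delta)$ is the type-$A$/type-$B$ part and would follow from the inversion interpretation in \eqref{d:length}: inversions of $\g$ split into those internal to a block (counted by $\delta$) and those between blocks (counted by $\tau$), with the color-contribution terms $\sum_{c_i \neq 0}(\sigma(i) + c_i - 1)$ distributing compatibly because block-sorting preserves which entries carry nonzero color. This is where I expect the \emph{main obstacle}: the length formula \eqref{d:length} mixes the pure inversion statistic with a color-dependent correction $\sum_{c_i \neq 0}(\sigma(i)+c_i-1)$, and I must show this correction term is additive under the factorization. The subtlety is that the values $\sigma(i)$ appearing in the correction are the \emph{absolute values}, and sorting within a block can move an entry to a new position while keeping its absolute value and color; one has to argue carefully that the sum of corrections over $\tau$ plus over $\delta$ reproduces the sum over $\g$, which requires that $\delta$ (living in $G_J$) only reorders positions without altering the multiset of $(|\g(i)|, c_i)$ pairs inside each block.

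The color additivity $\col(\g) = \col(\tau) + \col(\delta)$ is the easier half and I would dispatch it last: since color weight is the total sum $\sum c_i$ and the factorization merely redistributes the \emph{same} colored entries between the two factors (sorting permutes entries, carrying their colors along), the multiset of colors is preserved, giving $\col$ additivity essentially for free once the construction of $\tau$ and $\delta$ is pinned down. The most natural way to make both additivity claims simultaneously transparent is to phrase the construction so that $\delta$ acts by position-permutations internal to blocks and $\tau$ records the inter-block structure, so that no color is ever created or destroyed and every inversion is attributed to exactly one factor.
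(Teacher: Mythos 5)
Your construction is in outline the same as the paper's: cut $[0,n-1]\setminus J$ into blocks, let $\delta\in G_J$ sort $\g$ within each block, put $\tau:=\g\cdot\delta^{-1}\in G^J$, and split inversions into intra-block and inter-block ones. In the case $0\notin J$ your argument is complete and is exactly the paper's case a): there $\delta$ is an ordinary colorless permutation, $\tau$ carries precisely the colored entries of $\g$, so the correction term $\sum_{c_i\neq 0}(\s(i)+c_i-1)$ in \eqref{d:length} is literally the same for $\g$ and $\tau$, and inversions split cleanly. The genuine gap is the case $0\in J$, which you correctly flagged as the main obstacle but then resolved with a false premise. When $0\in J$, the factor of $G_J$ containing $s_0$ is a copy of $G(r,d_1)$ acting on positions $1,\ldots,d_1$ (where $d_1$ is the smallest element of $[0,n-1]\setminus J$), and its elements can only have window entries with absolute values in $[d_1]$. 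Hence the colors of the first block of $\g$ must be carried by $\delta$ attached to \emph{reduced} absolute values, and your key claim that $\delta$ ``only reorders positions without altering the multiset of $(|\g(i)|,c_i)$ pairs inside each block'' cannot hold. In the paper's own example $\g=[5,2^2,4^1,3,1^1,6^2]$ with $[0,5]\setminus J=\{0,3\}$, one gets $\delta=[3,1^2,2^1\,|\,6,5,4]$: the colored entries $2^2,4^1$ of $\g$ contribute $(2+2-1)+(4+1-1)=7$ to the correction term, while the corresponding entries $1^2,2^1$ of $\delta$ contribute only $(1+2-1)+(2+1-1)=4$. So the correction term alone is not additive; and neither is the inversion count alone, because the first block of $\tau$ consists of positive colorless entries while the first block of $\g$ contains colored entries, which are negative in the order \eqref{order}, so the inter-block inversions of $\tau$ and of $\g$ also differ.

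The heart of the paper's proof is precisely to play these two failures against each other: if the colored entry $\s(i)^{c_i}$ of the first block sits at position $j$ of $\tau$, the deficit $\s(i)-j$ in the correction term equals the number of entries in the later blocks whose absolute value is smaller than $\s(i)$, and this is exactly the surplus of inter-block inversions that the positive entry $\tau(j)=\s(i)$ creates in $\tau$ compared with the colored entry $\g(i)=\s(i)^{c_i}$ in $\g$. Without this compensation argument the length half of \eqref{length-color} is unproved in the only case that distinguishes $G(r,n)$ from the symmetric-group situation, so the proposal has a real gap rather than a stylistic difference. (A smaller slip pointing the same way: the constraint $c_1(\tau)=0$ on elements $\tau\in G^J$ arises when $0\in J$, not when $0\notin J$ as you wrote.)
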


\begin{proof} 
We let $[0,n-1]\setminus J:=\{d_1,\ldots, d_k\}$, where $d_1<\ldots<d_k$.  We have two cases to consider.
\begin{itemize}
\item[a)]
Suppose  $0 \not\in J$, hence $d_1=0$.  This means that 
\begin{align*}
G_J&\simeq S[1, d_2] \times S[d_2+1, d_3] \times \ldots \times S[d_{k}+1,n],\quad\textrm{and}\\
 G^J&=\{\tau \in G(r,n) \mid \Des_G(\tau)\subseteq \{0,d_2,\ldots,d_k\}\},
\end{align*}
 where $S[a,b]$ denotes the symmetric group  of the set $[a,b]$.
We can write any element $\g \in G(r,n)$ as juxtaposition of $k$ words 

\begin{equation}\label{mots}
\g=\g^1 \mid  \g^2 \mid\ldots \mid \g^k,
\end{equation}
where $\g^i:=[\g(d_{i}+1),\ldots, \g(d_{i+1})]$, for $i=1,\ldots,k$, and $d_{k+1}:=n$.
We define 
\begin{equation}\label{taudelta}
\tau:=\uparrow\!\g^1\cdots \uparrow\! \g^k \quad {\rm and} \quad \delta=\delta^1\cdots \delta^k,
\end{equation}
where $\uparrow\! \g^i$ is the increasing arrangement of the letters in $\g^i$, (cf. Definition~\ref{pif}), and  $\delta^i$ is the unique permutation in $S[d_{i}+1,d_{i+1}]$ such that 
\begin{align}\label{action}
(\uparrow \! \g^i)\cdot {\delta^i}
=\g^i, \quad {\rm for} \quad i \in [k].
\end{align}

For example, suppose $\g=[5,2^2,4^1,3,1^1,6^2,8,7^2]\in G(3,8)$ and $J=\{1,2,4,5,7\}$. 
We have $d_{1}=0$, $d_{2}=3$, $d_{3}=6$,
and $\g^1\mid \g^2\mid \g^3=[5,2^2,4^1\mid 3, 1^1,6^2\mid 8,7^2]$. 
Then
$\tau=[4^1,2^2,5\mid 6^2,1^3,3 \mid 7^2,8]$,
 and $\delta=[3,2,1 \mid 6,5,4 \mid 8,7]$.
\smallskip

Clearly, from \eqref{action} and definition of product  follows that  $\g =\tau \cdot \delta$. This decomposition is unique due to the uniqueness
of $\tau$ and $\delta$. 

Since $\delta \in S_n$, and $\tau$ has  the same colored entries of $\g$
we have  
\begin{align}\label{coleurs}
\sum_{c_{i}(\g)\neq 0}(\sigma(i)+c_{i}-1)=\sum_{c_{i}(\tau)\neq 0}(|\tau(i)|+c_{i}-1),
\end{align}
where recall that $ |\tau(i)|$ stands for the absolute value of 
 $\tau(i)$. Therefore $ \col(\g)=\col(\tau)+\col(\delta)$.

For $i<j$, we denote $\inv(\g^i\mid \g^{j}):=|\{(a,b) \mid a \in \g^i, b \in \g^j\}|$ the number of  inversions between blocs.  Clearly we have 
\begin{equation}\label{sum-inversions}
\inv(\g)=\sum_{1\leq i\leq k}\inv(\g^{i})+\sum_{1\leq i<j\leq k} \inv(\g^{i}\mid \g^j).
\end{equation}
Note that
\begin{equation}\label{inv-blocs}
\inv(\uparrow\!\g^i)=\inv(\delta^i\mid \delta^{j})=0, \; \inv(\g^i)=\inv(\delta^i), \;  \textrm{and} \; \inv(\g^i\mid \g^{j})=\inv(\uparrow\!\g^i\mid\uparrow\!\g^{j}),
\end{equation}
for $1\leq i \leq k$ and $1\leq i<j\leq k$.
Thus by \eqref{sum-inversions} we have $\inv(\g)=\inv(\tau)+\inv(\delta)$. This, \eqref{coleurs}, and the definition (\ref{d:length}) imply $\ell_G(\g)=\ell_G(\tau)+\ell_G(\delta)$.
\smallskip

\item[b)] Now suppose $0\in J$, hence  $d_1>0$. Then
\begin{align*}
G_J&\simeq G(r,d_1)\times S[d_2+1, d_3] \times \ldots \times S[d_{k}+1,n],\quad\textrm{and}\\
 G^J&=\{\tau \in G(r,n) \mid \Des_G(\tau)\subseteq \{d_1,d_2,\ldots,d_k\}\},
  \end{align*}
 If $\g=[\s(1)^{c_1},\ldots,\s(n)^{c_n}] \in G(r,n)$, we let 
$$\tau:=\uparrow\! \s^1 \uparrow\! \g^2 \cdots \uparrow\! \g^k,$$
where $\uparrow\! \g^j$ is defined as in \eqref{mots}, and $\uparrow\! \s^1$ is the  increasing arrangement of the absolute values of the entries in $\g^1$. We define  $\delta^{1}\in G(r,d_{1})$ as the unique colored permutation such that $\g^1=\uparrow\! \s^1\cdot \delta^1$
and $\delta_{2}, \ldots, \delta_{k}$ as in (\ref{taudelta}).

For example, if $\g=[5,2^2,4^1,3,1^1,6^2]\in G(3,6)$, and $[0,5]\setminus J=\{0,3\}$, then $$\tau=[2,4,5 \mid 6^2,1^3,3] \quad {\rm and} \quad \delta=[3,1^2,2^1 \mid 6,5,4].$$
As before we have $\g=\tau\cdot \delta$, with $\tau \in G^J$ and $\delta \in G_J$, and the uniqueness of the decomposition. Since $0 \not\in \Des_G(\tau)$ and  $\uparrow\! \s^1$ is an increasing sequence, all entries in  $\uparrow\! \s^1$ are positive. Notice that $\delta^1$ is an order and color preserving reduction of $\g^1$. 

More precisely, if $\g^1=[a_1^{c_1},\ldots a_{d_1}^{c_{d_1}}]$ then
$\delta^1=[b_1^{c_1},\ldots b_{d_1}^{c_{d_1}}] \in G(r,d_1)$, 
and the two corresponding absolute value  sequences are ordered in the same way. 
In the above example we have $\g^1=[5,2^2,4^1]$ and $\delta^1=[3,1^2,2^1]$. \\
From this and the definitions of $\tau$ and $\delta$, it follows that 
$$\col(\g)=\col(\uparrow\! \g^2\mid\cdots \mid\uparrow\! \g^k)+\col(\delta^1)= \col(\tau)+\col(\delta).$$
Since the colors are preserved by the decomposition, the equation $\ell_G(\g)=\ell_G(\tau)+\ell_G(\delta)$, is equivalent to  
\begin{equation}\label{equivalent}
 \inv(\g) + \sum_{c_i(\g)\neq 0} \sigma(i) = \inv(\tau) + \sum_{c_{i}(\tau)\neq 0} | \tau(i)| + \inv(\delta) + \sum_{c_{i}(\delta)\neq 0} |\delta(i)|.\end{equation}
 
For $i>2$ all equations in (\ref{inv-blocs}) hold also in this case. However the last of those fails for the blocs $\g^1$ and $\uparrow\! \g^1$. So we can proceed as in case a) for all entries except for the colored entries of $\g^1$. So let us consider $c_i\neq 0$, with $\s(i)^{c_i} \in \g^1$, and $\sigma(i)=\tau(j)$ with $j\in [d_1]$. Since $\uparrow\! \g^1$ is an order and color preserving reduction of $\g^1$, we have $|\delta(i)|=j$. Hence any colored entry $\s(i)^{c_i} $ contributes $\s(i)$ on the LHS of (\ref{equivalent}), but only $j$ on the RHS. It suffices to show that this difference $\s(i)-j$ is balanced by the extra inversions created by $\tau(j)=\s(i)$ in $\tau$, namely by
\begin{equation*}
\inv(\tau(j) \mid \uparrow\! \g^{2} \cdots \uparrow\! \g^{k}) -\inv(\g(i) \mid \g^{2} \cdots \g^{k}).
\end{equation*}
Note that $\inv(\tau(j) \mid \uparrow\! \g^{2} \cdots \uparrow\! \g^{k})=\inv(\s(i) \mid \g^{2} \cdots \g^{k}$).
From this and the definition of the order \eqref{order}, the above difference is equal to the number of elements in $\g^{2} \ldots \g^{k}$ which  are smaller in absolute value than $\sigma(i)$. This number is equal to $\s(i)-j$ since $\uparrow\! \g^1$ is an increasing sequence and $\s(i)$ is the entry in position $j$ of $\tau$.
\end{itemize}
The proof is now completed.
\end{proof}

\begin{definition}
Let $f=(f_1^{c_1},\ldots,f_n^{c_n}) \in \CS$ and  $\pi(f)$ the associated  colored permutation (cf. Definition~\ref{pif}). We define
\begin{equation}\label{def:lecol}
\inv(f):=\ell_G(\pi(f)) \quad {\rm and} \quad \col(f):=\col(\pi(f)).
\end{equation}
Let  $\underline{n}=(n_0,n_1,\ldots,n_k)$ be a {\em composition} of $n$,  i.e., 
$n_{i}\in \NN$ and $n_{0}+\cdots +n_{k}=n$. We let
\begin{eqnarray*}
\CS(\underline{n}):=\{{f \in \CS \mid \#\{i : f_i=j\}=n_j}\}.
\end{eqnarray*}
\end{definition}

\begin{lemma}\label{analogoReiner}
If $\underline{n}=(n_0,n_1,\ldots,n_k)$ is  a composition of $n$, then
\begin{align}\label{keylem}
\sum_{f \in \CS(\underline{n})} 
p^{\inv(f)}a^{\col(f)}={\hat{n} \brack  \hat{n}_0, n_1, \ldots, n_k}_{a[r-1]_{ap},p}.
\end{align}
\end{lemma}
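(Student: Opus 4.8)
The plan is to group the sum on the left according to the colored permutation $\g=\pi(f)$ and to reduce it to a generating function over a single quotient of $G(r,n)$. Since by definition $\inv(f)=\ell_G(\pi(f))$ and $\col(f)=\col(\pi(f))$, I would first rewrite the left-hand side as
\begin{equation*}
\sum_{f\in\CS(\underline n)}p^{\inv(f)}a^{\col(f)}
=\sum_{\g\in G(r,n)}p^{\ell_G(\g)}a^{\col(\g)}\,\#\{f\in\CS(\underline n)\mid\pi(f)=\g\},
\end{equation*}
so everything comes down to understanding each fibre. By Proposition~\ref{l:bijection} the map $f\mapsto(\pi(f),\lambda(f))$ is a bijection, and in the notation of its proof one has $f_{\sigma(i)}=\mu_i$ with $\mu_i=\lambda_i+|\{j\in\Des_G(\g)\mid j\le i-1\}|$. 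Hence the value multiset of $f$ equals that of $\mu$; the constraint $f\in\CS(\underline n)$ forces the value $j$ to occur $n_j$ times, and since $\mu$ is nondecreasing, $\mu$ must be the \emph{fixed} partition
\begin{equation*}
\mu=(\,\underbrace{0,\ldots,0}_{n_0},\underbrace{1,\ldots,1}_{n_1},\ldots,\underbrace{k,\ldots,k}_{n_k}\,).
\end{equation*}

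Next I would identify the fibre precisely. For this fixed $\mu$, Proposition~\ref{compatible-associated} tells us that $\pi(\mu^{\tilde{\g}^{-1}})=\g$ if and only if $\mu$ is $\g$-compatible, and in that case $f=\mu^{\tilde{\g}^{-1}}$ is the unique preimage (its membership in $\CS$ being the same verification as in the proof of Proposition~\ref{l:bijection}). Therefore each fibre has size $\chi(\mu\text{ is }\g\text{-compatible})$. By Definition~\ref{gamma-compatible}, $\mu$ is $\g$-compatible exactly when $\Des_G(\g)\subseteq S$, where $S:=\{i\in[0,n-1]\mid\mu_i<\mu_{i+1}\}$ with $\mu_0:=0$; writing $J:=[0,n-1]\setminus S$, this is precisely the condition $\g\in G^J$. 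Consequently the left-hand side equals $\sum_{\g\in G^J}p^{\ell_G(\g)}a^{\col(\g)}$.

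To evaluate this I would use the parabolic factorization of Proposition~\ref{quotient}: every $\g=\tau\cdot\delta$ with $\tau\in G^J$, $\delta\in G_J$, and with $\ell_G$ and $\col$ additive, so that
\begin{equation*}
\sum_{\g\in G(r,n)}p^{\ell_G(\g)}a^{\col(\g)}
=\Bigl(\sum_{\tau\in G^J}p^{\ell_G(\tau)}a^{\col(\tau)}\Bigr)
\Bigl(\sum_{\delta\in G_J}p^{\ell_G(\delta)}a^{\col(\delta)}\Bigr).
\end{equation*}
I first record the full wreath-product series: summing part 3) of Lemma~\ref{l:projection} over $B_m$ and using the standard identity $\sum_{w\in B_m}p^{\ell_B(w)}y^{\nneg(w)}=(-yp;p)_m[m]_p!$ (whose $a=1$ specialization is \eqref{gf:lengthG}), with $y=a[r-1]_{ap}$, gives $\sum_{\g\in G(r,m)}p^{\ell_G(\g)}a^{\col(\g)}=[\hat m]_{a[r-1]_{ap},p}!$. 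The positions in $J$ are the within-block positions of $\mu$, so $0\in J\Leftrightarrow n_0>0$: if $n_0>0$ this is case b) of Proposition~\ref{quotient} and $G_J\simeq G(r,n_0)\times S_{n_1}\times\cdots\times S_{n_k}$, while if $n_0=0$ it is case a) and $G_J\simeq S_{n_1}\times\cdots\times S_{n_k}$. Since $\col\equiv0$ on every symmetric factor and $\sum_{S_m}p^{\inv}=[m]_p!$, in both cases the $G_J$-series equals $[\hat n_0]_{a[r-1]_{ap},p}!\,[n_1]_p!\cdots[n_k]_p!$ (using $[\hat 0]!=1=[0]_p!$ to treat $n_0=0$ uniformly). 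Dividing then yields exactly ${\hat n\brack \hat n_0,n_1,\ldots,n_k}_{a[r-1]_{ap},p}$.

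The main obstacle is the first step: showing rigorously that the fibre over $\g$ is a singleton when nonempty and converting the value-multiplicity condition defining $\CS(\underline n)$ into $\g$-compatibility of the fixed $\mu$, hence into membership in the quotient $G^J$ with the correct $J$. Once $J$ is pinned down, the remaining work—matching $J$ to the two cases of Proposition~\ref{quotient} and assembling the two known factors—is routine bookkeeping, the only other ingredient being the bivariate $(\ell_G,\col)$-generating function over $G(r,m)$, which follows from Lemma~\ref{l:projection} together with the classical $B_m$ identity.
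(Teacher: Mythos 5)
Your proof is correct, and its skeleton is the same as the paper's: identify the left-hand side with $\sum_{\tau\in G^{J}}p^{\ell_G(\tau)}a^{\col(\tau)}$ for $J=[0,n-1]\setminus\{n_0,n_0+n_1,\ldots,n_0+\cdots+n_{k-1}\}$, factor via Proposition~\ref{quotient}, and divide the full-group series by the $G_J$-series. You differ in two local ingredients. First, where the paper disposes of the identification $\CS(\underline n)\leftrightarrow G^{J}$ in one line (descents of $\pi(f)$ can only occur between blocs), you reconstruct it fibre by fibre: the composition constraint pins the nondecreasing rearrangement $\mu$, Proposition~\ref{l:bijection} forces $f=\mu^{\tilde\g^{-1}}$, and Proposition~\ref{compatible-associated} converts nonemptiness of the fibre into $\g$-compatibility of the fixed $\mu$, i.e.\ $\g\in G^{J}$; this is more detailed and arguably more rigorous, at the cost of length. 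Second, for the evaluation of $\sum_{\g\in G(r,n)}p^{\ell_G(\g)}a^{\col(\g)}=[\hat n]_{a[r-1]_{ap},p}!$ you sum part 3) of Lemma~\ref{l:projection} over $B_n$ and invoke Brenti's $B_n$ formula, whereas the paper derives it internally by a second application of Proposition~\ref{quotient} with $J_0=[1,n-1]$ and the recursion $A_n(p,a)=(1+ap^n[r-1]_{ap})A_{n-1}(p,a)$; the paper explicitly notes your route as a valid alternative in the remark following the proof (it imports the classical type-$B$ result instead of keeping the argument self-contained). All remaining bookkeeping, including the case split $n_0>0$ versus $n_0=0$ for $G_J$, matches the paper.
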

\begin{proof}
We recall  that for any $f \in \CS$  the descents of $\pi(f)$ can occur only between two different blocs (see \eqref{Anu}). This implies that the restriction of the map $f \mapsto \pi(f)$ induces a bijection between $\CS(\underline{n})$  and the quotient associated to  $J:=[0,n-1]\setminus \{{n_0},{n_0+n_1},\ldots, {n_0+\ldots+n_{k-1}}\}$, namely
\begin{align}\label{bij}
\CS(\underline{n})  \longleftrightarrow G^J=\left\{\g \in G(r,n) \mid \Des_G(\g) \subseteq 
\{n_0,n_0+n_1,\ldots,n_0+\ldots+n_{k-1}\} \right\}.
\end{align}
From (\ref{bij}) and (\ref{def:lecol}) it follows that
$$\sum_{f \in \CS(\underline{n})} p^{\inv(f)}a^{\col(f)} = \sum_{\tau \in G^J} p^{\ell_G(\tau)}a^{\col(\tau)}.$$
 Since  the parabolic subgroup  $G_J$ is isomorphic to $G(r,n_0)\times S_{n_1}\times\ldots \times S_{n_k}$, from Proposition~\ref{quotient} we derive
$$\sum_{\g \in G(r,n)} p^{\ell_G(\g)} a^{\col(\g)} = \sum_{\tau \in G^J}  
p^{\ell_G(\tau)} a^{\col(\tau)}\cdot \sum_{\sigma \in G_J}  p^{\ell_G(\s)} a^{\col(\s)}.$$
Hence
\begin{equation}\label{frazione}
\sum_{\tau \in G^J}  p^{\ell_G(\tau)} a^{\col(\tau)}=\frac{\sum_{\g \in G(r,n)}  p^{\ell_G(\g)} a^{\col(\g)}}{\sum_{\sigma \in G(r,n_0)}  p^{\ell_G(\s)} a^{\col(\s)} \cdot \sum_{\s \in S_{n_1}} p^{\ell_G(\s)} \cdots \sum_{\s \in S_{n_k}} p^{\ell_G(\s)}}.
\end{equation}
To compute $\sum_{\g \in G(r,n)} p^{\ell_G(g)} a^{\col(g)}$ we use again the decomposition of Proposition~\ref{quotient}. This time we take the maximal parabolic subgroup  generated by $J_0:=[1,n-1]$.  It is easy to see that 
$G^{J_0}=\{\tau \in G(r,n) \mid \tau(1)<\ldots<\tau(n)\}$ and $G_{J_0}=S_n$. Hence
$$\sum_{\g \in G(r,n)} p^{\ell_G(\g)} a^{\col(\g)}=\sum_{\tau \in G^{J_0}} p^{\ell_G(\tau)} a^{\col(\tau)}\cdot \sum_{\s \in S_n} p^{\ell_G(\s)}.$$
It  is well known (see (\ref{gf:lengthG})) that the second factor is
$\sum_{\s \in S_n} p^{\ell_G(\s)}=[n]_p!$.
Denote the first factor by  $A_n(p,a):=\sum_{\tau \in G^{J_0}} p^{\ell_G(\tau)} a^{\col(\tau)}$.
Since the position of $n$ in $\tau\in G^{J_0}$ can only be the first if its color is bigger than 0 or the last if it is 0, 
we see that 
$$A_n(p,a)=(1+ap^n[r-1]_{ap})\cdot A_{n-1}(p,a),$$
with $A_0(p,a):=1$. 
By induction we get 
$$A_n(p,a)=\prod_{i=1}^n(1+ap^i[r-1]_{ap}),$$
therefore
\begin{equation}\label{e:ellcol}
\sum_{\g \in G(r,n)} p^{\ell_G(\g)} a^{\col(\g)}=[n]_p! \cdot \prod_{i=1}^n(1+ap^i[r-1]_{ap}).
\end{equation}
By substituting the above results into (\ref{frazione}) we obtain \eqref{keylem}.
\end{proof}
\noindent Note that \eqref{e:ellcol} can be also derived from part 3) of Proposition~\ref{l:projection} together with \cite[Proposition 3.3]{fb94}.

\section{The distribution of $(\des_G,\maj,\ell_G,\col)$}
Reiner~\cite[Theorem 7.2]{re93} computed the generating function of a triple of statistics $(d_{R}, \maj_{R}, \inv_{R})$ of ``type'' number of descents, major index, and length. We refer the reader to his paper for the precise definitions. Reiner's result and Theorem~\ref{teq:wreath} impliy that the equidistributions of the pairs $(d_{R}, \inv_{R})$ and $(\des_{G}, \ell_{G})$ are the same. However, the tri-variate distributions of $(d_{R}, \maj_{R}, \inv_{R})$ and $(\des_{G}, \maj, \ell_{G})$ are different.  
\smallskip

In this section, by using our encoding, the tools developed in \S~\ref{bijection1}, and Lemma~\ref{analogoReiner}, we compute the generating function  of $(\des_G,\maj,\ell_G,\col)$ 
and obtain the following identity. 
\begin{theorem}\label{teq:wreath}  We have 
\begin{align}\label{eq:wreath}
 \sum_{n\geq 0}\frac{u^n}{(t;q)_{n+1}[\hat n]_{a[r-1]_{ap},p}!}\sum_{\g\in G(r,n)}
t^{\des_G(\g)}q^{\maj(\g)}p^{\ell_G(\g)}a^{\col(\g)}=
\sum_{k\geq 0}t^k\prod_{i=0}^{k-1}e[q^{i}u]_{p}\cdot {\hat e}[q^ku]_{a[r-1]_{ap},p}.
\end{align}
\end{theorem}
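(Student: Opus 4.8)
The plan is to transform the left-hand side of \eqref{eq:wreath} into the product on the right by trading the sum over $G(r,n)$ for a sum over colored sequences and then factoring an exponential generating function. Throughout I write $b:=a[r-1]_{ap}$ for the parameter in the hat-factorials. First I would eliminate the inner sum: by Proposition~\ref{desmaj}, for each $\g\in G(r,n)$,
$$\frac{t^{\des_G(\g)}q^{\maj(\g)}}{(t;q)_n}=\sum_{f\in\CS\,\mid\,\pi(f)=\g}t^{\max(f)}q^{\max(f)\cdot n-|f|}.$$
Multiplying by $p^{\ell_G(\g)}a^{\col(\g)}$, summing over $\g$, and recalling from \eqref{def:lecol} that $\inv(f)=\ell_G(\pi(f))$ and $\col(f)=\col(\pi(f))$, the fibers of $f\mapsto\pi(f)$ partition $\CS$, so the double sum collapses to
$$\frac{1}{(t;q)_n}\sum_{\g\in G(r,n)}t^{\des_G(\g)}q^{\maj(\g)}p^{\ell_G(\g)}a^{\col(\g)}=\sum_{f\in\CS}t^{\max(f)}q^{\max(f)\cdot n-|f|}p^{\inv(f)}a^{\col(f)}.$$
Since $(t;q)_{n+1}=(1-tq^n)(t;q)_n$, the left side of \eqref{eq:wreath} equals $\sum_{n\geq0}\frac{u^n}{(1-tq^n)[\hat n]_{b,p}!}\sum_{f\in\CS}t^{\max(f)}q^{\max(f)\cdot n-|f|}p^{\inv(f)}a^{\col(f)}$.

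Next I would absorb the geometric factor. Expanding
$$\frac{t^{\max(f)}q^{\max(f)\cdot n-|f|}}{1-tq^n}=\sum_{k\geq\max(f)}t^kq^{kn-|f|}$$
and interchanging summation order (legitimate as formal power series, since each power of $u$ receives only finitely many contributions) rewrites the left side as
$$\sum_{k\geq0}t^k\sum_{n\geq0}\frac{u^n}{[\hat n]_{b,p}!}\sum_{\substack{f\in\CS\\\max(f)\leq k}}q^{kn-|f|}p^{\inv(f)}a^{\col(f)}.$$
For fixed $k$ the sequences with $\max(f)\leq k$ are exactly those with entries in $\{0,1,\ldots,k\}$, so I would classify them by the composition $\underline n=(n_0,n_1,\ldots,n_k)$ counting how many entries equal each value. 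Then $|f|=\sum_j jn_j$, hence $kn-|f|=\sum_{j=0}^k(k-j)n_j$, and Lemma~\ref{analogoReiner} evaluates the sum over $\CS(\underline n)$ as the hat-multinomial. Using the cancellation
$$\frac{1}{[\hat n]_{b,p}!}{\hat n\brack\hat n_0,n_1,\ldots,n_k}_{b,p}=\frac{1}{[\hat n_0]_{b,p}![n_1]_p!\cdots[n_k]_p!},$$
the coefficient of $t^k$ becomes $\sum_{n\geq0}\sum_{\underline n}q^{\sum_j(k-j)n_j}\frac{u^{n_0+\cdots+n_k}}{[\hat n_0]_{b,p}![n_1]_p!\cdots[n_k]_p!}$.

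Finally, this summand factors completely over the parts of $\underline n$. The factor from the zeros carries weight $q^{kn_0}$ and the hat-factorial $[\hat n_0]_{b,p}!$, giving $\sum_{n_0\geq0}\frac{(q^ku)^{n_0}}{[\hat n_0]_{b,p}!}=\hat e[q^ku]_{b,p}$ by \eqref{exp}; the part recording entries equal to $j\in\{1,\ldots,k\}$ carries weight $q^{(k-j)n_j}$ and the plain factorial $[n_j]_p!$, giving $e[q^{k-j}u]_p$. As $j$ runs over $1,\ldots,k$ the exponent $k-j$ runs over $0,\ldots,k-1$, so these assemble to $\prod_{i=0}^{k-1}e[q^iu]_p$, and the coefficient of $t^k$ is $\prod_{i=0}^{k-1}e[q^iu]_p\cdot\hat e[q^ku]_{b,p}$, which is the right-hand side of \eqref{eq:wreath}.

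The step I would check most carefully is this last reindexing $j\mapsto k-j$: it is what places the hat-factor — the zeros, which carry the highest $q$-weight $q^{kn_0}$ and which by \eqref{f-1} and Lemma~\ref{analogoReiner} inherit the hat-factorial — as the trailing factor $\hat e[q^ku]_{b,p}$, while the remaining values supply the ordinary factors $e[q^iu]_p$ with $i=0,\ldots,k-1$. Apart from this bookkeeping the argument is mechanical: every substantive input (the sequence encoding, the $(t;q)_n$ evaluation, and the flag-length/color multinomial) is already established, so no further obstacle is expected.
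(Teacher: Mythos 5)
Your proposal is correct and takes essentially the same route as the paper's proof: both rely on Proposition~\ref{desmaj} to trade the sum over $G(r,n)$ for a sum over $\CS$, on the geometric expansion of $1/(1-tq^n)$, and on Lemma~\ref{analogoReiner} followed by the factorization into $\prod_{i=0}^{k-1}e[q^iu]_p\cdot\hat e[q^ku]_{a[r-1]_{ap},p}$, including the reindexing that places the hat-factor (coming from the zero entries) at $q^k$. The only difference is organizational --- the paper evaluates the intermediate series $\sum_{f\in\CS}t^{\max(f)}q^{\max(f)\cdot n-|f|}p^{\inv(f)}a^{\col(f)}$ in two ways, while you run the identical computation as a single chain from the left-hand side to the right-hand side.
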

\begin{proof}
The proof consists in computing in two different ways the series 
\begin{equation*}
\sum_{f\in \CS} t^{\max(f)}q^{\max(f)\cdot n-|f|}
p^{\inv(f)}a^{\col(f)}.
\end{equation*}

First, by using the formula
\begin{equation}\label{tmax}
(1-t)\sum_{k\geq 0}a_{\leq k}t^k=\sum_{k\geq 0}(a_{\leq k}-a_{\leq k-1})t^k=
\sum_{k\geq 0} a_{k}t^k,
\end{equation}
where $a_{\leq k}=a_0+\cdots +a_k$,
we derive immediately
$$
\sum_{f\in \CS}t^{\max(f)}q^{\max(f)\cdot n-|f|}
p^{\inv(f)}a^{\col(f)}=(1-tq^n)\sum_{k\geq 0}t^k
\sum_{f\in \CS \mid \max(f)\leq k} q^{k n-|f|}
p^{\inv(f)}a^{\col(f)}.
$$
By Lemma~\ref{analogoReiner}  we can write the second sum as
\begin{align*}
\sum_{f\in \CS \mid \max(f)\leq k} q^{k\cdot n-|f|}
p^{\inv(f)}a^{\col(f)}
&=\sum_{n_0+\cdots+n_k=n}q^{k \sum_i n_i -\sum_i i n_i}
\sum_{f\in \CS(\underline{n})} 
p^{\inv(f)}a^{\col(f)}\\
&=\sum_{n_0+\cdots+n_k=n}{\hat n \brack \hat n_0, n_1, \ldots, n_k}_{a[r-1]_{ap},p} q^{\sum_i n_i(k-i)},
\end{align*}
which is equal to the coefficient of $u^n$ in 
$$ e[u]_pe[qu]_{p}\ldots {\hat e}[q^ku]_{a[r-1]_{ap},p} \times [\hat n]_{a[r-1]_{ap},p}! .$$
Thus we obtain
\begin{align*}
\sum_{n\geq 0}\frac{u^n}{[\hat n]_{a[r-1]_{ap},p}!(1-tq^n)} & \sum_{f\in \CS} t^{\max(f)}q^{\max(f)\cdot n-|f|} p^{\inv(f)}a^{\col(f)}\\
&=\sum_{k\geq 0}t^k e[u]_p e[qu]_{p}\cdots e[q^{k-1}u]_{p}   {\hat e}[q^ku]_{a[r-1]_{ap},p}.
\end{align*}
\smallskip

On the other hand, by using Lemma~\ref{desmaj} and (\ref{def:lecol}), we obtain
\begin{align*}
\sum_{f\in \CS} t^{\max(f)}q^{\max(f)\cdot n-|f|}
p^{\inv(f)}a^{\col(f)}  \nonumber
&=\sum_{\g \in G(r,n)}p^{\ell_G(\g)}a^{\col(\g)}
\sum_{f \in \CS \mid \pi(f)=\g} t^{\max(f)}q^{\max(f)\cdot n-|f|}\\
&=\sum_{\g \in G(r,n)}\frac{t^{\des_G(\g)}q^{\maj(\g)}p^{\ell_G(\g)}a^{\col(\g)}}{(t;q)_n}.
\end{align*}
By comparing the above two formulas the result follows.
\end{proof}

\begin{remark}\label{r:equivalence}
Note that in the case $r=2$, we obtain the corresponding identity for $B_{n}$.
Conversely, starting from Equation \eqref{eq:wreath}  with $r=2$, thanks to Proposition~\ref{l:projection} we can recover Equation \eqref{eq:wreath} for general $r$, by the substitution $a \leftarrow a[r-1]_{ap}$. Therefore we can say that the two identities are equivalent. In \cite{bz09} we give an independent proof of Theorem~\ref{teq:wreath} for $r=2$. 
\end{remark}

\section{Encoding colored biwords}
In this paragraph we are going to show a result which will be fundamental in the proof of Theorem~\ref{5stat}. It generalizes a well-known bijection of Garsia and Gessel concerning bipartite partitions \cite[Theorem 2.1]{gg79}. For other extensions of this bijection see \cite{berbia06, bl06, bc04, re93}. 

\begin{definition}\label{01} We define ${\mathcal B}(r, n)$ to be the set of colored 
biwords ${g \choose f}={g_1, \ldots, g_n \choose f_1^{c_1},\ldots, f_n^{c_n}} \in \mathcal{P}_n\times \CS$ satisfying the following condition:  
\begin{equation}\label{condition}
 {\rm if} \ g_i=g_{i+1} \ {\rm and} \ \left\{\begin{array}{ll}
 f_i\neq f_{i+1}, & \ {\rm then} \ f_i^{c_i} < f_{i+1}^{c_{i+1}}; \\
 f_i= f_{i+1}, &\ {\rm then} \ c_i=0 \Rightarrow c_{i+1}=0,
\end{array}\right.
\end{equation}

\noindent where $i\geq 0$, and  by convention $g_0:=0$ and $f_0:=0$.
\end{definition}

\begin{remark}\label{r01}
In other words, if $g_i=g_{i+1}$ and $f_i=f_{i+1}$, with $c_i\neq 0$, there are no restrictions on the color of $f_{i+1}$. Note that, in the case $g_i=0$ then condition~(\ref{condition}) implies $c_i(f)=0$.
\end{remark}

\begin{example}\label{ex:biw} The followings are three elements of $\mathcal{B}(3,4)$
$$\begin{pmatrix} 1 & 1 & 3 & 3 \\
4^2 & 4^1 & 6^2 & 0\end{pmatrix}, \quad \begin{pmatrix} 1 & 1 & 3 & 3 \\
4^1 & 4^2 & 6^2 & 0\end{pmatrix}, \quad \begin{pmatrix} 1 & 1 & 3 & 3 \\
4^1 & 4 & 6^2 & 0\end{pmatrix},$$
while
$$\begin{pmatrix} 1 & 1 & 3 & 3 \\
4 & 4^1 & 6^2 & 0\end{pmatrix} \not \in \mathcal{B}(3,4).$$
\end{example}

\begin{theorem}\label{ggg}
There exists a bijection 
$$
{g\choose f} \xrightarrow{\:\varphi\:}  (\g, \lambda, \mu),
$$ 
between the set of colored biwords ${\mathcal B}(r,n)$  and the triplets $  (\g, \lambda, \mu),$
where 
\begin{itemize}
\item[$i)$] $\g\in G(r,n)$, 
\item[$ii)$] $\lambda$ is a partition $\tilde{\g}^{-1}$-compatible.
\item[$iii)$] $\mu$ is a partition $\g$-compatible.
\end{itemize}
\end{theorem}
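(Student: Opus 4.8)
The plan is to construct the bijection $\varphi$ explicitly by first sorting the colored biword into a canonical ``doubly nondecreasing'' form, reading off the permutation $\g$ from the sorting, and then extracting the two partitions from the top row $g$ and bottom-row values $f$ respectively. Concretely, given ${g \choose f} \in \mathcal{B}(r,n)$, I would first apply a stable sort to the columns so that the pairs $(g_i, f_i^{c_i})$ are arranged in weakly increasing order, using $g$ as the primary key and the colored value $f^c$ (ordered via the total order in \eqref{order}) as the secondary key, with the tie-breaking rule dictated by condition~\eqref{condition}. The permutation recording this rearrangement — that is, the permutation $\sigma$ together with the colors $c_i$ carried along — will be $\g = \pi(f)$ in the sense of Definition~\ref{pif}, so that $\g$ is determined by the bottom row $f$ alone. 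This gives item $i)$ immediately.

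Next I would define the two partitions by subtracting off the appropriate ``descent staircases,'' exactly as in Lemma~\ref{laf}. For the partition $\mu$ coming from the bottom row, I set $\mu = \lambda(f)$ via \eqref{etiquette}, which Lemma~\ref{laf} already guarantees is a genuine partition; the content of condition~\eqref{condition} on the $f$-entries (the clause forcing $f_i^{c_i} < f_{i+1}^{c_{i+1}}$ or color-propagation when $g_i = g_{i+1}$) is precisely what ensures that $f$ is associated to $\g$ in the sense of Proposition~\ref{def:bijection}, and hence by Proposition~\ref{compatible-associated} that $\mu$ is $\g$-compatible, giving item $iii)$. For $\lambda$ coming from the top row, I would symmetrically subtract the staircase built from the descent set of the \emph{skew inverse} $\tilde{\g}^{-1}$: since after sorting the top row $g$ is weakly increasing and its rises/repetitions are controlled by the secondary ordering, the same Lemma~\ref{laf}-type argument applied to $\tilde{\g}^{-1}$ shows $\lambda$ is a partition that is $\tilde{\g}^{-1}$-compatible, yielding item $ii)$.

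To prove bijectivity I would construct the inverse map directly. Given a triple $(\g, \lambda, \mu)$ with $\lambda$ being $\tilde{\g}^{-1}$-compatible and $\mu$ being $\g$-compatible, I would reconstruct the bottom row by adding back the $\g$-descent staircase to $\mu$ and then applying $\tilde{\g}^{-1}$ as in \eqref{f-1}, i.e.\ $f := \nu^{\tilde{\g}^{-1}}$ where $\nu_i = \mu_i + |\{j \in \Des_G(\g) \mid j \le i-1\}|$; Proposition~\ref{compatible-associated} guarantees $\pi(f) = \g$, so the bottom row is consistent with $\g$. The top row $g$ is recovered analogously from $\lambda$ using the descent set of $\tilde{\g}^{-1}$, and I would then verify that the resulting ${g \choose f}$ lies in $\mathcal{B}(r,n)$ by checking condition~\eqref{condition} column by column. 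The two constructions are mutually inverse by the uniqueness clause of Proposition~\ref{def:bijection} together with the bijection of Proposition~\ref{l:bijection}.

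The main obstacle I anticipate is handling the interaction between the two partitions and the asymmetry introduced by the colors. The top row lives in an ordinary partition $\mathcal{P}_n$ (uncolored) while the bottom row carries colors, so the compatibility condition for $\lambda$ must be phrased against $\tilde{\g}^{-1}$ rather than $\g$; getting the skew inverse \eqref{skew} to line up correctly — in particular verifying that the descent positions of $\tilde{\g}^{-1}$ are exactly where the top row is forced to strictly increase — is the delicate bookkeeping step, and the special role of the color-$0$ convention (Remark~\ref{r01}, where $g_i = 0$ forces $c_i(f) = 0$) must be threaded through so that $f$ genuinely lands in $\CS$ rather than merely in $\mathbb{N}^{(r,n)}$. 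Verifying that condition~\eqref{condition} is equivalent to the conjunction of the two compatibility requirements, in both directions and including the boundary conventions $g_0 = f_0 = 0$, is where the real work lies.
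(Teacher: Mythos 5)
Your construction contains a genuine error at its core: you apply the staircase subtraction of Lemma~\ref{laf} and Proposition~\ref{l:bijection} where the theorem requires no subtraction at all. The paper's map is simply $\g:=\pi(f)$, $\lambda:=g$ (the top row, \emph{unchanged}), and $\mu:=(f_{\s(1)},\ldots,f_{\s(n)})$ (the bottom row merely sorted). Compatibility of $\mu$ is then immediate from \eqref{pro-ass}, and compatibility of $\lambda=g$ is exactly what condition~\eqref{condition} forces. Your $\mu:=\lambda(f)$ defined via \eqref{etiquette} is indeed a partition, but it is in general \emph{not} $\g$-compatible: at $i\in\Des_G(\g)$ one only gets $\mu_{i+1}-\mu_i=f_{\s(i+1)}-f_{\s(i)}-1\geq 0$, not the strict inequality that Definition~\ref{gamma-compatible} demands. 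Concretely, the uncolored biword with $g=(0,1)$ and $f=(1,0)$ lies in $\mathcal{B}(r,2)$ for every $r$; here $\g=\pi(f)=[2,1]$ and $\Des_G(\g)=\Des_G(\tilde{\g}^{-1})=\{1\}$, and your recipe gives $\mu=(0,0)$ and $\lambda=(0,0)$, both of which violate the required strict increase at position $1$, whereas the paper's map gives $\lambda=\mu=(0,1)$, which are compatible. The misstep is the appeal to Proposition~\ref{compatible-associated}: that proposition characterizes $\g$-compatibility of $\mu$ by $\pi(\mu^{\tilde{\g}^{-1}})=\g$, so it applies to the partition whose un-sorting \emph{is} $f$, namely the plain sorted sequence; after subtracting the staircase one has $\mu^{\tilde{\g}^{-1}}\neq f$, and the proposition says nothing about your $\mu$. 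What your forward map (subtract staircases) and inverse map (add them back) would produce, if completed, is a bijection of $\mathcal{B}(r,n)$ with $G(r,n)\times\mathcal{P}_n\times\mathcal{P}_n$ --- a different statement from the theorem, and one that moreover destroys the identities $\max(f)=\max(\mu)$ and $|f|=|\mu|$ on which the application of the theorem in the proof of Theorem~\ref{5stat} relies.

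Even under the charitable reading in which the subtractions are dropped, your proof is still missing its main content: showing that condition~\eqref{condition} forces $g_i<g_{i+1}$ at every $i\in\Des_G(\tilde{\g}^{-1})$, and conversely that $\tilde{\g}^{-1}$-compatibility of $g$ implies \eqref{condition} for $f:=\mu^{\tilde{\g}^{-1}}$. This is precisely the case analysis occupying most of the paper's proof: descents of $\tilde{\g}^{-1}$ arise in three ways depending on whether $\s^{-1}(i)>\s^{-1}(i+1)$ and on the colors $\tilde{c}_i,\tilde{c}_{i+1}$, and in the colored cases one needs the order \eqref{order} to conclude, e.g., that $f_{i+1}\leq f_i$ still yields $f_{i+1}^{\tilde{c}_{i+1}}<f_i$ when $\tilde{c}_{i+1}>0$; the boundary case $0\in\Des_G(\tilde{\g}^{-1})$ is handled via Remark~\ref{r01}. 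You explicitly flag this as ``where the real work lies'' but do not carry it out, so item $ii)$ and the surjectivity check remain unproved. Two smaller inaccuracies: condition~\eqref{condition} plays no role in ensuring that $f$ is associated with $\g$ (that holds by definition, since $\g:=\pi(f)$); and $\pi(f)$ is not the permutation of a stable sort with $g$ as primary key --- $g$ is already nondecreasing, so that sort only rearranges within blocks of constant $g$; in the example above the $g$-primary sort is the identity while $\pi(f)=[2,1]$. The permutation $\pi(f)$ is determined by $f$ alone via Definition~\ref{pif}.
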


\begin{proof}
For $g=(g_1,\ldots,g_n) \in \mathcal{P}_n$ and $f=(f_1^{c_1},\ldots, f_n^{c_n})\in \CS$, we define
$\varphi({g\choose f} )=(\g, \lambda, \mu)$ by 
\begin{align}\label{def:varphi}
\g:=\pi(f):=(c_1,\ldots, c_n; \s),\quad \lambda:= g \quad\textrm{and}\quad 
\mu:=(f_{\s(1)},\ldots, f_{\s(n)}).
\end{align}
First of all, we have to show that the triplet $(\g, \lambda, \mu)$ satisfies the above three conditions. To simplify the notation we let $\tilde{c}_i:=c_i(\tilde{\g}^{-1})$, within this proof.

The condition $i)$ is clear. To check that $\la$ is $\tilde{\g}^{-1}$-compatible, we need to show that $\la_i<\la_{i+1}$ for $i \in \Des_G(\tilde{\g}^{-1})$. If $i=0 \in \Des_G(\tilde{\g}^{-1})$, then $\tilde{c}_1>0$, and so $\g=[\ldots,1^{\tilde{c}_1},\ldots]$. Since $\pi(f)=\g$ this implies $c_{1}(f)>0$. 
Since  ${g \choose f} \in \mathcal{B}(r,n)$, Remark~\ref{r01} implies $\la_1=g_{1}>0$. Now, let $i>1$ be a descent of $\tilde{\g}^{-1}$. We have three cases to consider. 
\begin{itemize}
\item[a)] $\s^{-1}(i)>\s^{-1}(i+1)$ and $\tilde{c}_i=\tilde{c}_{i+1}=0$. In this case the window notation of $\g$ will be of the form 
$$\g=[\ldots,(i+1),\ldots, i,\ldots].$$
Since $\pi(f)=\g$ we have $f_{i+1}<f_{i}$. Now ${g \choose f} \in {\mathcal B}(r,n)$ and so this implies $g_i<g_{i+1}$.

\item[b)] $\s^{-1}(i)>\s^{-1}(i+1)$, $\tilde{c}_i=0$, and  $\tilde{c}_{i+1}>0$. In this case 
$$\g=[\ldots,(i+1)^{\tilde{c}_{i+1}},\ldots, i,\ldots].$$
Hence $f_{i+1}\leq f_{i}$. In both cases, equal or strictly smaller, we have $f_{i+1}^{\tilde{c}_{i+1}}<f_{i}$. Since ${g \choose f} \in {\mathcal B}(r,n)$ this implies $g_i<g_{i+1}$.

\item[c)] $\s^{-1}(i)<\s^{-1}(i+1)$,  $\tilde{c}_i\geq 0$, and  $\tilde{c}_{i+1}>0$. In this case 
$$\g=[\ldots,i^{\tilde{c}_i} ,\ldots, (i+1)^{\tilde{c}_{i+1}},\ldots].$$
Hence $f_{i}<f_{i+1}$. Moreover $f_{i+1}^{\tilde{c}_{i+1}}<f_{i}^{\tilde{c}_{i}}$. Since ${g \choose f} \in {\mathcal B}(r,n)$ this implies once again $g_i<g_{i+1}$.
\end{itemize}
Thus part $ii$)  is checked.  Condition  $iii$) holds, since
 $\mu$ is clearly $\g$-compatible in view of (\ref{pro-ass})).  


\smallskip

By construction  the map $\varphi$ is clearly injective. Since any triplet $(\g, \la,\mu)$ satisfying the above three conditions is
equal to  $\varphi({g\choose f})$ with $g:=\la$ and  $f:=\mu^{\tilde{\g}^{-1}}$ (see Proposition~\ref{compatible-associated}), to show that the map  is surjective, it suffices 
 to prove that ${g \choose f} \in B(r,n)$. Hence 
we need to check the condition~\eqref{condition}. So suppose $g_i=g_{i+1}$. Since $g$ is $\tilde{\g}^{-1}$-compatible, by definition
$\tilde{\g}^{-1}(i)<\tilde{\g}^{-1}(i+1),$ so either
\begin{itemize}
\item[1)] $\s^{-1}(i)<\s^{-1}(i+1)$, with $\tilde{c}_{i} \geq 0$ and $\tilde{c}_{i+1}=0$, or 
\item[2)] $\s^{-1}(i)>\s^{-1}(i+1)$, with $\tilde{c}_{i}>0$ and $\tilde{c}_{i+1} \geq0$.
\end{itemize}
In the first case, since $\mu$ is a nondecreasing sequence, we have $f_i=\mu_{\s^{-1}(i)}\leq \mu_{\s^{-1}(i+1)}=f_{i+1}$.  As 
$\tilde{c}_{i+1}=0$ we have $f_i^{\tilde{c}_i}\leq f_{i+1}$, and condition~\eqref{condition} follows.

\noindent In the second case we obtain $f_i \geq f_{i+1}$. If  $f_i > f_{i+1}$ then $f_i^{\tilde{c}_i}< f_{i+1}^{\tilde{c}_{i+1}}$. Otherwise $f_i=f_{i+1}$ and condition~\eqref{condition} holds since $\tilde{c}_{i}>0$.
\end{proof}

\begin{example}
For $r=4$ and $n=7$ we have
$$ 
{g \choose f}=  \begin{pmatrix}
      0&1&1&3&3&4&5  \\
     4&4^1&1&3^3&6&3^1&4^2\\
   \end{pmatrix}
\xrightarrow{\ \varphi \ } 
\left\{\begin{array}{ll}
\g=[3,6^1,4^3,7^2,2^1, 1,5],\\
\lambda=(0,1,1,3,3,4,5),\\
\mu=(1,3,3,4,4,4,6).
\end{array}\right.
$$
We find $\tilde{\g}^{-1}=[6, 5^1, 1, 3^3, 7, 2^1,4^2]$,
$\lambda^\g=(1,4^1, 3^3, 5^2, 1^1, 0,3)$,
and $\mu^{\tilde{\g}^{-1}}=(4,4^1,1,3^3,6, 3^1,4^2)=f$.
Note that $\pi(\lambda^\g)=\tilde{\g}^{-1}$, and $\pi(\mu^{\tilde{\g}^{-1}})=\g$.
\end{example}

\begin{remark} \label{multiset}
It is clear that to each biwords ${g \choose f} \in {\mathcal{B}}(r,n)$, we can associate the multiset of $\left\{{g_i\choose f_i^{c_i}}\right\}_{i\in [n]}$ of its columns.  This multiset  can be decomposed into $r$-disjoint multisets of total cardinality $n$, depending on the colors of $f_i$:
\begin{equation*}
\left\{{g_i\choose f_i} \mid \; g_i\geq 0,\; f_i\geq 0\right\}\cup \;\bigcup_{h=1}^{r-1}
\left\{{g_i\choose f_i^h} \mid  \; g_i>0,\; f_i> 0 \right\}.
\end{equation*}

Conversely given such a multiset  $\left\{{i \choose j^h}^{n_{ij}(h)}\right\}_{0\leq h \leq r-1}$, where $n_{ij}(h)$ is the multiplicity of the column ${i \choose j^h}$, there exist 
${n_{ij} \choose  n_{ij}(1),  \ldots,  n_{ij}(r-1)}$ biwords in $\mathcal{B}(r,n)$ corresponding to it, where $n_{ij}:=\sum_{h=1}^{r-1}n_{ij}(h)$ is the number of colored entries. 
\end{remark}

\section{The distribution of $(\des_G, \ides_G, \maj, \imaj, \col, \icol)$ }
\label{fivevariate}

In this section we compute the distribution  of the vector statistic
$(\des_G, \ides_G, \maj, \imaj, \col, \icol)$ 
over the set of colored permutations $G(r,n)$, and obtain the following identity.

\begin{theorem}\label{5stat} We have
\begin{align}\label{e:6stat}
\sum_{n\geq 0}\frac{u^n}{(t_1;q_1)_{n+1}(t_2;q_2)_{n+1}}
&\sum_{\g \in G(r,n)}
t_1^{\des_G(\g)}t_2^{\des_G(\g^{-1})}q_1^{\maj(\g)}
q_2^{\maj(\g^{-1})} a^{\col(\g)} b^{\col(\g^{-1})} \\
&=\sum_{k_1,k_2\geq 0}\frac{t_1^{k_1}t_2^{k_2}}
{(u ; q_1,q_2)_{k_1+1,k_2+1} (ab  [r-1]_{a, b} u ; q_1,q_2)_{k_1,k_2}}.
\end{align}
where $[r-1]_{a,b}:=\frac{a^{r-1}-b^{r-1}}{a-b}.$
\end{theorem}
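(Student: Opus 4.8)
The plan is to compute the double generating series

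\begin{equation*}
\sum_{n\geq 0}\frac{u^n}{[\hat n]_{ab[r-1]_{a,b},\ldots}}\,\cdots
\end{equation*}

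in two different ways by summing a suitable weight over the set of colored biwords $\mathcal{B}(r,n)$, mirroring exactly the structure of the proof of Theorem~\ref{teq:wreath} but now using the bijection $\varphi$ of Theorem~\ref{ggg} in place of the single-partition bijection of Proposition~\ref{l:bijection}. Concretely, to each biword ${g\choose f}\in\mathcal{B}(r,n)$ I attach the weight
\begin{equation*}
t_1^{\max(g)}t_2^{\max(f)}\,q_1^{\max(g)\cdot n-|g|}\,q_2^{\max(f)\cdot n-|f|}\,a^{\col(f)}b^{\,\cdot},
\end{equation*}
where the color contributions must be split so that $\col(f)$ records the color weight of $\g=\pi(f)$ and a companion statistic records the color weight of $\tilde{\g}^{-1}$ (equivalently $\g^{-1}$, since by \eqref{phig} these have the same projection and hence the same descent data). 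The whole point is that the $t_1,q_1,a$-part of the weight depends only on $g$ and on the $\tilde{\g}^{-1}$-side, while the $t_2,q_2,b$-part depends only on $f$ and on the $\g$-side, so the bijection $\varphi$ cleanly separates the two halves.

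First I would evaluate the sum by fixing $\g$ and using the bijection $\varphi$ to replace the sum over biwords mapping to $\g$ by a product of two independent sums over $\g$-compatible and $\tilde{\g}^{-1}$-compatible partitions. The key input is Proposition~\ref{compatible-associated}: $\mu$ is $\g$-compatible iff $\pi(\mu^{\tilde{\g}^{-1}})=\g$, and symmetrically for $\lambda$ on the inverse side. This lets me apply the computation of Proposition~\ref{desmaj} (really its underlying geometric-series evaluation) separately to each factor, producing
\begin{equation*}
\frac{t_1^{\des_G(\g^{-1})}q_1^{\maj(\g^{-1})}}{(t_1;q_1)_n}\cdot
\frac{t_2^{\des_G(\g)}q_2^{\maj(\g)}}{(t_2;q_2)_n},
\end{equation*}
after summing over $\g\in G(r,n)$ and weighting by $a^{\col(\g)}b^{\col(\g^{-1})}$; here I use part~2) of Lemma~\ref{l:projection} and the identity $\Des_G(\tilde\g^{-1})=\Des_G(\g^{-1})$ to identify the descent/major statistics of the skew inverse with those of the genuine inverse. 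This recovers exactly the left-hand side of \eqref{e:6stat}.

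Second, and this is where the main work lies, I would evaluate the same series by summing over the \emph{columns} of the biword rather than over $\g$, using the multiset decomposition of Remark~\ref{multiset}. Each distinct column ${i\choose j}$ with both entries positive can carry any of the $r-1$ nonzero colors on its bottom entry, and Remark~\ref{multiset} tells me that a given colored multiset corresponds to a $p$-multinomial number of biwords. The color weights $\col(f)$ and $\col(\tilde\g^{-1})$ are additive over columns, and the crucial algebraic fact is that summing $a^{h}b^{r-h}$ (or the appropriate split of the color contribution to $\g$ versus $\g^{-1}$) over $h\in[1,r-1]$ produces the factor $ab[r-1]_{a,b}$ with $[r-1]_{a,b}=\frac{a^{r-1}-b^{r-1}}{a-b}$. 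Summing the $q_1,q_2$-weights over all column multiplicities as independent geometric series yields, for uncolored columns ${i\choose j}$ ranging over $i\in[0,k_1],\,j\in[0,k_2]$, the factor $1/(u;q_1,q_2)_{k_1+1,k_2+1}$, and for colored columns ranging over $i\in[1,k_1],\,j\in[1,k_2]$ the factor $1/(ab[r-1]_{a,b}u;q_1,q_2)_{k_1,k_2}$; the shift from $k+1$ to $k$ in the index ranges is exactly the by-now-familiar bookkeeping that $\max=0$ is allowed on uncolored but not on colored entries. Extracting the coefficient of $t_1^{k_1}t_2^{k_2}$ via the two-variable analogue of \eqref{tmax} then produces the right-hand side, and comparing the two evaluations gives \eqref{e:6stat}.

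The main obstacle I anticipate is the careful bookkeeping of the color statistic across the bijection: one must verify that under $\varphi$ the color weight $\col(\g)$ and the color weight $\col(\g^{-1})$ factor correctly as a product $a^{\col(\g)}b^{\col(\g^{-1})}$ that distributes over the columns in the form $ab[r-1]_{a,b}$, using the inverse-color relation \eqref{inverse} together with \eqref{phig}. The subtlety is that the color $h$ appearing on a bottom entry of a column contributes $h$ to one of $\col(\g),\col(\g^{-1})$ and $r-h$ to the other, so the per-column color generating factor is $\sum_{h=1}^{r-1}a^{h}b^{r-h}=ab\,\frac{a^{r-1}-b^{r-1}}{a-b}$; getting this assignment exactly right, and confirming it is compatible with the geometric summation over the $q_1,q_2$-exponents, is the delicate part. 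Everything else is a two-variable reprise of the argument already carried out for Theorem~\ref{teq:wreath}.
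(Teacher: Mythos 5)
Your proposal is correct and follows essentially the same route as the paper's own proof: the paper likewise evaluates the weighted sum over colored biwords in $\mathcal{B}(r,n)$ once via the bijection of Theorem~\ref{ggg} together with Propositions~\ref{compatible-associated} and~\ref{desmaj} (plus \eqref{phig} and Corollary~\ref{l:desmaj}), and once column-by-column via Remark~\ref{multiset}, where the per-column color factor $\sum_{h=1}^{r-1}a^{h}b^{r-h}=ab[r-1]_{a,b}$ comes from the inverse-color relation \eqref{inverse}. The only deviations are cosmetic: pairing $(t_1,q_1)$ with $g$ rather than $f$ (with the attendant crossing of $a$ and $b$) just yields the statement with $a$ and $b$ exchanged, which is equivalent since both sides are symmetric under swapping $(t_1,q_1,a)\leftrightarrow(t_2,q_2,b)$; and the companion color statistic must be $\col(\g^{-1})$, not $\col(\tilde{\g}^{-1})=\col(\g)$ --- exactly what your $h$ versus $r-h$ bookkeeping (and the paper's weight $b^{\col(\pi(f)^{-1})}$) records, and an ordinary (not $p$-) multinomial coefficient counts the biwords over each column multiset.
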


\begin{proof}
We will count in two ways the expression
\begin{align}\label{eq:expression}
\sum_{{g\choose f}\in {\mathcal B}(r,n)}
t_1^{\max(f)} t_2^{\max(g)} q_1^{n \max(f)-|f|} q_2^{n \max(g)-|g|} a^{\col(f)} b^{\col(f^{-1})},
\end{align}
where we let $\col(f^{-1}):=\col(\pi(f)^{-1}).$
\smallskip

As before, by using (\ref{tmax}), first rewrite the above sum as
$$
(1-t_1q_1^n)(1-t_2q_2^n)\sum_{k_1,k_2\geq 0}t_1^{k_1}t_2^{k_2}
\sum_{\genfrac{}{}{0pt}{}{{g\choose f}\in {\mathcal B}(r,n)}{\max(f)\leq k_1, \max(g)\leq k_2}}
q_1^{nk_1-|f|}q_2^{nk_2-|g|}a^{\col(f)}b^{\col(f^{-1})}.
$$
By writing  the exponents of $q_1$ (resp. $q_2$) as 
$\sum_s(k_1-|f_s|)$ (resp. $\sum_t(k_2-|g_t|)$) 
 we derive from  Remark~\ref{multiset} that the sum
\begin{align*}
&\sum_{\genfrac{}{}{0pt}{}{{g\choose f}\in {\mathcal B}(r,n)}{\max(f)\leq k_1, \max(g)\leq k_2}} q_1^{nk_1-|f|}q_2^{nk_2-|g|}a^{\col(f)} b^{\col(f^{-1})}
\end{align*}
is equal to the coefficient of $u^n$ in the expansion of 
\begin{align}
&\prod_{0\leq i\leq k_1\atop 0\leq j\leq k_2}\sum_{n_{ij}(0)\geq 0} (uq_1^i q_2^j)^{n_{ij}(0)}  \nonumber \\
& \quad \quad \quad  \quad \times \prod_{0\leq i\leq k_1-1\atop 0\leq j\leq k_2-1} \sum_{n_{ij}\geq 0} \sum_{n_{ij}(1)+\ldots+ n_{ij}(r-1)=n_{ij}} {n_{ij} \choose  n_{ij}(1),  \ldots,  n_{ij}(r-1)}  \prod_{h=1}^{r-1}(ua^h b^{r-h} q_1^iq_2^j)^{n_{ij}(h)}\nonumber \\
&=\prod_{0\leq i\leq k_1\atop 0\leq j\leq k_2}\frac{1}{1-uq_1^{i}q_2^{j}}
\prod_{0\leq i\leq k_1-1\atop 0\leq j\leq k_2-1}\sum_{n_{ij}\geq 0}
\left(u b^r(a/b+\cdots +(a/b)^{r-1})q_1^iq_2^j\right)^{n_{ij}} \label{long} \\
&=\prod_{0\leq i\leq k_1\atop 0\leq j\leq k_2}\frac{1}{1-uq_1^{i}q_2^{j}}
\prod_{0\leq i\leq k_1-1\atop 0\leq j\leq k_2-1} \frac{1}{1- ab [r-1]_{a,b} uq_1^{i}q_2^{j}}\nonumber \\
&=
\frac{1}{(u; q_1,q_2)_{k_1+1,k_2+1}}\frac{1}{(ab[r-1]_{a,b} u; q_1,q_2)_{k_1,k_2}}.\nonumber
\end{align}

Finally,  the sum \eqref{eq:expression} is equal to the coefficient of $u^n$ in
\begin{equation}\label{directcount}
(1-t_1q_1^n)(1-t_2q_2^n)\sum_{k_1,k_2\geq 0}\frac{t_1^{k_1}t_2^{k_2}}
{(u; q_1,q_2)_{k_1+1,k_2+1} (ab[r-1]_{a,b} u; q_1,q_2)_{k_1,k_2}}.
\end{equation}
 
On the other hand, from Theorem~\ref{ggg}, and Proposition~\ref{compatible-associated} the sum 
\eqref{eq:expression} is equal to
\begin{align*}
&\sum_{\g \in G(r,n)}
\sum_{ \mu\in \mathcal{P}_n\atop \pi(\mu^{\tilde{\g}^{-1}})=\g}
t_1^{\max(\mu)} q_1^{n\max(\mu)-|\mu|} a^{\col(\g)} \sum_{ \lambda\in \mathcal{P}_n\atop \pi(\lambda^\g)=\tilde{\g}^{-1}} 
t_2^{\max(\lambda)}q_2^{n\max(\lambda)-|\lambda|} b^{\col(\g^{-1})}\\
&= \sum_{\g \in G(r,n)} \sum_{f \in \NN^{(r,n)} \mid \pi(f)=\g}
t_1^{\max(f)}q_1^{n\max(f)-|f|} a^{\col(\g)}   \sum_{g \in  \NN^{(r,n)} \mid \pi(g)=\tilde{\g}^{-1}}
t_2^{\max(g)}q_2^{n\max(g)-|g|} b^{\col(\g^{-1})} \\
&=\sum_{\g \in G(r,n)}\frac{t_1^{\des_G(\g)} q_1^{\maj(\g)} a^{\col(\g)}}
{(t_1;q_1)_n}
\frac{t_2^{\des_G(\g^{-1})}q_2^{\maj(\g^{-1})}b^{\col(\g^{-1})}}{(t_2;q_2)_n},
\end{align*}
where the last equality follows from 
Proposition~\ref{desmaj}, \eqref{phig}, and Corollary~\ref{l:desmaj}. 
Finally, comparing this expression with (\ref{directcount}) we get the theorem.
\end{proof}

\begin{remark}
As Equation~\eqref{eq:wreath}, Equation~\eqref{e:6stat} is also equivalent to the case of $r=2$. Since $\col(\g^{-1})=nr - \col(\g)$, Equation~\eqref{e:6stat} is clearly equivalent to the $b=1$ case. Now, we can recover the general $r$-case from the $r=2$ case, via the substitution $a \leftarrow a[r-1]_{a}$ and Lemma~\ref{l:projection}.
\end{remark}
\section{Specializations}\label{spec}
In this section we consider four specializations of Identity~(\ref{eq:wreath}) giving 
all possible distributions of pair of statistics. In particular we get generalizations of four classical results dues to Carlitz, Brenti, Reiner and Gessel-Roselle for the symmetric and hyperoctahedral group. Finally show a relation between  Theorem~\ref{5stat}  and a result  of Adin-Roichman.
\bigskip

Letting $p=1$  and substituting $u$ by $([r]_a)u$ in \eqref{eq:wreath}, then 
extracting the coefficient of $u^n/n!$ 
yields a $G(r,n)$ analogue of a result of Chow-Gessel~\cite[Equation (26)]{cg07}:
\begin{align}\label{eq:cg2}
\frac{\sum_{\g \in G(r,n)}t^{\des_G(\g)}q^{\maj(\g)}a^{\col(\g)}}{(t;q)_{n+1}}
=\sum_{k\geq 0}([k+1]_q+a[r-1]_a[k]_q)^nt^k.
\end{align}

 Letting $q\leftarrow q^r$, $p=1$, $a=q$ in \eqref{eq:cg2} yields a formula
for the distribution of descents and flag-major index over $G(r,n)$.
\begin{proposition}[Carlitz's identity for $G(r,n)$]
Let $r, n \in \mathbb{P}$. Then
\begin{align}\label{eq:carlitzG}
\frac{\sum_{\g\in G(r,n)} t^{\des_G(\g)}q^{\fmaj(\g)}}{(t;q^r)_{n+1}}=
\sum_{k\geq 0}t^k [r  k +1]_q^n.
\end{align}
\end{proposition}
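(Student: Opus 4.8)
The plan is to derive \eqref{eq:carlitzG} directly from the Chow-Gessel type identity \eqref{eq:cg2}, which is stated just above as a specialization of the main Theorem~\ref{teq:wreath}. The point is that the flag-major index $\fmaj(\g)=r\cdot\maj(\g)+\col(\g)$ packages the pair $(\maj,\col)$ into a single statistic, and a suitable monomial substitution in \eqref{eq:cg2} collapses the two variables $q$ and $a$ into one. First I would take \eqref{eq:cg2} and perform the substitution $q\leftarrow q^r$ together with $a=q$ and $p=1$; under this substitution the left-hand numerator becomes
\begin{equation*}
\sum_{\g\in G(r,n)}t^{\des_G(\g)}(q^r)^{\maj(\g)}q^{\col(\g)}
=\sum_{\g\in G(r,n)}t^{\des_G(\g)}q^{r\maj(\g)+\col(\g)}
=\sum_{\g\in G(r,n)}t^{\des_G(\g)}q^{\fmaj(\g)},
\end{equation*}
by the very definition of $\fmaj$, while the denominator $(t;q)_{n+1}$ becomes $(t;q^r)_{n+1}$. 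This matches the left-hand side of \eqref{eq:carlitzG} exactly.

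Next I would check that the right-hand side of \eqref{eq:cg2} simplifies correctly under the same substitution. The factor $[k+1]_q+a[r-1]_a[k]_q$ must become $[rk+1]_q$ after sending $q\mapsto q^r$ and $a\mapsto q$. The key computation is with the $p$-integers: under $q\mapsto q^r$ we have $[m]_{q^r}=\frac{1-q^{rm}}{1-q^r}$, and $a[r-1]_a$ with $a=q$ gives $q\cdot\frac{1-q^{r-1}}{1-q}=\frac{q-q^r}{1-q}=\frac{q(1-q^{r-1})}{1-q}$. I would then combine
\begin{equation*}
[k+1]_{q^r}+q[r-1]_q\,[k]_{q^r}
=\frac{1-q^{r(k+1)}}{1-q^r}+\frac{q(1-q^{r-1})}{1-q}\cdot\frac{1-q^{rk}}{1-q^r},
\end{equation*}
put everything over the common denominator $(1-q)(1-q^r)$, and verify that the numerator telescopes to $(1-q)^{-1}(1-q^{rk+1})$, i.e. that the whole expression equals $[rk+1]_q=\frac{1-q^{rk+1}}{1-q}$. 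This is the one genuinely computational step, and I expect it to be the main obstacle in the sense that it is the only place where something could go wrong: one must be careful that $[r-1]_a$ is evaluated at $a=q$ (not $q^r$) because the substitution $q\leftarrow q^r$ is applied to the base variable while $a$ is set equal to the original $q$. Tracking these two different roles of $q$ is the delicate bookkeeping.

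Having verified the numerator identity, the right-hand side $\sum_{k\ge 0}([k+1]_q+a[r-1]_a[k]_q)^n t^k$ transforms termwise into $\sum_{k\ge 0}[rk+1]_q^n\,t^k$, which is precisely the right-hand side of \eqref{eq:carlitzG}. Assembling the transformed numerator, denominator, and right-hand side then yields the claimed identity, completing the proof. I would present the argument as a short derivation: state the three substitutions, record the interpretation of $\fmaj$ on the left, and carry out the single $p$-integer simplification on the right, leaving the final comparison as immediate.
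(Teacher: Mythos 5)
Your proposal is correct and is exactly the paper's own proof: the authors obtain \eqref{eq:carlitzG} by the same substitution $q\leftarrow q^r$, $p=1$, $a=q$ in \eqref{eq:cg2}, with the flag-major index absorbing $r\maj+\col$ on the left. The telescoping computation you carry out, showing $[k+1]_{q^r}+q[r-1]_q[k]_{q^r}=[rk+1]_q$, is precisely the verification the paper leaves implicit, and your bookkeeping of the two roles of $q$ is accurate.
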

The above formula   gives a nice generalization of two  identities  of Carlitz over the symmetric group, and of Chow-Gessel over the hyperoctahedral group.  Moreover it gives a $q$-analogue of \cite[Theorem 17]{stein94} on the Eulerian polynomials of type
$G(r,n)$, due to Steingr\'{\i}msson.

The distribution of descents and length has been computed by Reiner~\cite{re95} for finite and affine Coxeter groups. We extend his results to wreath products as follows.
\begin{proposition}[Reiner identity for $G(r,n)$]
Let $r, n \in \mathbb{P}$. Then
\begin{align}\label{eq:reinerG}
\sum_{n\geq 0} \sum_{\g \in G(r,n)} t^{\des_G(\g)}p^{\ell_G(\g)}
\frac{u^n}{[\hat{n}]_{[r-1]_{p},p}!} = \frac{(1-t) \hat{e}[(1-t)u]_{[r-1]_p,p}}{1-t e[(1-t)u]_p}.
\end{align}
\end{proposition}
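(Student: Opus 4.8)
The plan is to derive the Reiner identity \eqref{eq:reinerG} directly from the master formula \eqref{eq:wreath} by specializing the parameters so as to collapse the $\maj$ and $\col$ statistics. Specifically, I would set $q=1$ and $a=1$ in Theorem~\ref{teq:wreath}. Under these substitutions the left-hand side of \eqref{eq:wreath} retains only the joint distribution of $\des_G$ and $\ell_G$, since $q^{\maj(\g)}\to 1$ and $a^{\col(\g)}\to 1$, and the denominator $(t;q)_{n+1}$ becomes $(1-t)^{n+1}$. The $(a,p)$-factorial $[\hat n]_{a[r-1]_{ap},p}!$ becomes $[\hat n]_{[r-1]_p,p}!$, matching exactly the normalization appearing in \eqref{eq:reinerG}. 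So the left-hand side of \eqref{eq:wreath} with $q=a=1$ reads
\begin{equation*}
\sum_{n\geq 0}\frac{u^n}{(1-t)^{n+1}[\hat n]_{[r-1]_p,p}!}\sum_{\g\in G(r,n)}t^{\des_G(\g)}p^{\ell_G(\g)}.
\end{equation*}

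Next I would handle the right-hand side of \eqref{eq:wreath} under the same specialization. Setting $q=1$ turns every factor $e[q^{i}u]_p$ into $e[u]_p$ and ${\hat e}[q^ku]_{a[r-1]_{ap},p}$ into $\hat e[u]_{[r-1]_p,p}$, so the $k$-th summand becomes $t^k\,(e[u]_p)^{k}\,\hat e[u]_{[r-1]_p,p}$. Summing the geometric-type series in $k$ gives
\begin{equation*}
\sum_{k\geq 0}t^k(e[u]_p)^k\,\hat e[u]_{[r-1]_p,p}=\frac{\hat e[u]_{[r-1]_p,p}}{1-t\,e[u]_p},
\end{equation*}
valid as a formal power series. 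Equating the two sides yields an identity in which the left side still carries the awkward factor $(1-t)^{-(n+1)}$ inside the sum over $n$.

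The key maneuver — and I expect this to be the main point requiring care — is the rescaling of the variable $u$ that removes the $(1-t)^{-(n+1)}$ factors and matches the stated form. Pulling out one factor of $(1-t)^{-1}$ and writing $u^n/(1-t)^n$ as $\bigl(u/(1-t)\bigr)^n$, I would substitute $u\leftarrow(1-t)u$ throughout. On the left this clears the $(1-t)^{-n}$ and leaves an overall $(1-t)^{-1}$; on the right the exponential arguments $e[u]_p$ and $\hat e[u]_{[r-1]_p,p}$ become $e[(1-t)u]_p$ and $\hat e[(1-t)u]_{[r-1]_p,p}$. Multiplying both sides by $(1-t)$ to clear the prefactor then gives precisely
\begin{equation*}
\sum_{n\geq 0}\sum_{\g\in G(r,n)}t^{\des_G(\g)}p^{\ell_G(\g)}\frac{u^n}{[\hat n]_{[r-1]_p,p}!}=\frac{(1-t)\,\hat e[(1-t)u]_{[r-1]_p,p}}{1-t\,e[(1-t)u]_p},
\end{equation*}
which is \eqref{eq:reinerG}.

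The only genuine subtlety is bookkeeping: one must verify that the specialization $q=1$ is legitimate (the $p$-binomial and factorial expressions in \eqref{eq:wreath} have no poles at $q=1$, and the generating-function identity holds as a formal identity in the remaining variables $t,p,u$), and that the substitution $u\leftarrow(1-t)u$ is an admissible formal-power-series operation — which it is, since it merely reindexes coefficients of $u^n$ by powers of $(1-t)$. I would present the computation as a short chain of substitutions rather than re-deriving anything, citing Theorem~\ref{teq:wreath} for the master identity and \eqref{exp} for the definitions of $e[\cdot]_p$ and $\hat e[\cdot]_{a,p}$. No new combinatorial input is needed beyond \eqref{eq:wreath} itself; the proposition is purely a specialization, so the expected obstacle is entirely one of careful algebraic simplification rather than of finding a new idea.
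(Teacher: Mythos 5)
Your proposal is correct and is precisely the paper's own argument: the paper proves \eqref{eq:reinerG} by setting $a=q=1$ in Equation~\eqref{eq:wreath} and substituting $u \leftarrow (1-t)u$, exactly the chain of specializations you describe. Your write-up merely fills in the intermediate bookkeeping (the geometric series in $k$ and the factor of $(1-t)$) that the paper leaves implicit.
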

It easily follows from Equation~(\ref{eq:wreath}) for $a=q=1$ and by letting $u \leftarrow (1-t)u$.

\smallskip

The distribution of descent and number of negative entries has been computed by Brenti \cite{fb94}. For $p=q=1$, $u \leftarrow u [r]_a (1-t)$ in Equation~\eqref{eq:wreath}, we obtain the following generalization.
\begin{proposition}[Brenti identity for $G(r,n)$]
\begin{align}\label{eq:brentiG}
 \sum_{n\geq 0} \sum_{\g\in G(r,n)}
t^{\des_G(\g)}a^{\col(\g)} \frac{u^n}{n!}=\frac{(1-t) e(u(1-t))}{1- t e((1+a[r-1]_a)u(1-t))}.
\end{align}
\end{proposition}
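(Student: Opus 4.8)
The plan is to deduce \eqref{eq:brentiG} directly from Theorem~\ref{teq:wreath} by specializing $p=q=1$ in \eqref{eq:wreath} and then performing the substitution $u \leftarrow u[r]_a(1-t)$. First I would record what each ingredient of \eqref{eq:wreath} becomes at $p=q=1$. The weight $q^{\maj(\g)}p^{\ell_G(\g)}$ collapses to $1$, so the inner sum on the left reduces to $\sum_{\g\in G(r,n)}t^{\des_G(\g)}a^{\col(\g)}$. For the denominators, $(t;q)_{n+1}=(1-t)(1-tq)\cdots(1-tq^n)\to(1-t)^{n+1}$, while the key is the $(a,p)$-factorial: since $[\hat n]_{X,p}!=(-Xp;p)_n[n]_p!$ and $[n]_p!\to n!$, with $X=a[r-1]_{ap}\to a[r-1]_a$ at $p=1$, one gets $(-X;1)_n=(1+X)^n$, hence
$$[\hat n]_{a[r-1]_{ap},p}!\Big|_{p=1}=(1+a[r-1]_a)^n\,n!=[r]_a^n\,n!,$$
using the elementary identity $1+a[r-1]_a=[r]_a$ that will serve as the bridge to the stated form. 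On the right-hand side, $e[q^iu]_p\to e(u)$ and $\hat e[q^ku]_{a[r-1]_{ap},p}\to e(u/[r]_a)$ by the same factorial computation, and the $q$'s disappear.

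With these specializations the geometric sum $\sum_{k\geq0}t^k\,e(u)^k=\tfrac1{1-t\,e(u)}$ (valid as a formal power series in $t$ with coefficients in $\mathbb{Q}[[u]]$) yields the intermediate identity
$$\sum_{n\geq 0}\frac{u^n}{(1-t)^{n+1}[r]_a^n\,n!}\sum_{\g\in G(r,n)}t^{\des_G(\g)}a^{\col(\g)}=\frac{e(u/[r]_a)}{1-t\,e(u)}.$$
I would then substitute $u \leftarrow u[r]_a(1-t)$. On the left the factor $(u[r]_a(1-t))^n$ cancels $(1-t)^n[r]_a^n$, leaving $\tfrac{1}{1-t}\sum_n\tfrac{u^n}{n!}\sum_{\g}t^{\des_G(\g)}a^{\col(\g)}$; on the right the arguments of the exponentials become $u/[r]_a\mapsto u(1-t)$ and $u\mapsto[r]_a\,u(1-t)$. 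Multiplying through by $(1-t)$ and rewriting $[r]_a=1+a[r-1]_a$ gives precisely
$$\sum_{n\geq 0}\sum_{\g\in G(r,n)}t^{\des_G(\g)}a^{\col(\g)}\frac{u^n}{n!}=\frac{(1-t)\,e(u(1-t))}{1-t\,e((1+a[r-1]_a)u(1-t))},$$
which is \eqref{eq:brentiG}.

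The computation is mechanical once the specializations are set up, so the only real obstacle is careful bookkeeping of the $(a,p)$-analogue factorial at $p=1$: one must verify both that $(-a[r-1]_{ap}\,p;p)_n$ tends to $(1+a[r-1]_a)^n$ and that this equals $[r]_a^n$, since these are exactly the factors that get absorbed by the change of variable. A secondary point worth a sentence is justifying the passage to $\tfrac1{1-t\,e(u)}$ as an identity of formal power series, so that extracting the coefficient of $u^n$ on each side is legitimate; this is routine because for each fixed power of $u$ the series in $t$ is well defined.
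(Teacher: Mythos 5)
Your proposal is correct and takes essentially the same route as the paper: the paper's proof of \eqref{eq:brentiG} consists precisely of setting $p=q=1$ and substituting $u \leftarrow u[r]_a(1-t)$ in Equation~\eqref{eq:wreath}, which is exactly your argument. Your intermediate identity and the observation $1+a[r-1]_a=[r]_a$ simply make explicit the bookkeeping that the paper leaves to the reader.
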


To compute the generating function of major index and length we proceed as follows.
Setting $a=1$ in equation (\ref{eq:wreath}) yields
\begin{equation}\label{maj-inv}
\sum_{n\geq 0}\frac{ u^n}{(t;q)_{n+1}[\hat{n}]_{[r-1]_p,p}!}\sum_{\g \in G(r,n)}
t^{\des_G(\g)}q^{\maj(\g)}p^{\ell_G(\g)}=
\sum_{k\geq 0}t^k\prod_{i=0}^{k-1}e[q^iu]_{p}\cdot {\hat e}[q^ku]_{[r-1]_{p},p}.
\end{equation}
By multiplying both sides of (\ref{maj-inv}) by $(1-t)$, and then by sending $t\rightarrow 1$ we obtain
\begin{eqnarray*}
\sum_{n\geq 0} \sum_{\g \in G(r,n)}
q^{\maj(\g)}p^{\ell_G(\g)} \frac{u^n}{(q;q)_{n}[\hat{n}]_{[r-1]_p,p}!} & = &
\displaystyle{\prod_{i\geq 0}e[q^iu]_{p}}.
\end{eqnarray*}
Replacing  $u$ by $u/(1-p)$ and applying $q$-binomial formula 
$e[u/(1-p)]_p= \prod_{j \geq 0} \frac{1}{1-p^ju}$
we get the following  $G(r,n)$-analogue  of an identity of Gessel-Roselle (see 
\cite[Theorem 8.5]{ge-thesis}  and the historical note  after Theorem 4.3 in  \cite{agr05}):
\begin{proposition}[Gessel-Roselle identity for $G(r,n)$]

\begin{align}\label{eq:roselle}
\sum_{n\geq 0} \frac{\sum_{\g\in G(r,n)}q^{\maj( \g)}p^{\ell_G(\g)} }{(q;q)_n(-p[r-1]_p;p)_n(p;p)_n} u^n=\frac{1}{(u; p, q)_{\infty, \infty}},
\end{align}
where $\displaystyle{(u; p, q)_{\infty, \infty:}=\prod_{i,j\geq 0} (1-up^{i}q^j)}$.
\end{proposition}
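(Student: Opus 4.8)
The plan is to derive the Gessel--Roselle identity \eqref{eq:roselle} by specializing the master formula \eqref{maj-inv}, which is itself the $a=1$ case of Theorem~\ref{teq:wreath}. First I would set $a=1$ in \eqref{eq:wreath}, noting that $[\hat n]_{a[r-1]_{ap},p}!$ collapses to $[\hat n]_{[r-1]_p,p}!$, which yields \eqref{maj-inv} as displayed. The strategy is then to kill the descent variable $t$ by extracting the constant term in the Eulerian-type generating function: multiply both sides of \eqref{maj-inv} by $(1-t)$ and let $t\to 1$. On the right-hand side, each factor $\prod_{i=0}^{k-1}e[q^iu]_p\cdot\hat e[q^ku]_{[r-1]_p,p}$ is a power series in $u$, and the sum over $k$ with the multiplier $(1-t)$ telescopes in the limit; since $(t;q)_{n+1}=(1-t)(tq;q)_n$, the factor $(1-t)$ cancels the one in the denominator, and setting $t=1$ leaves $(q;q)_n$ in the denominator on the left and the infinite product $\prod_{i\ge 0}e[q^iu]_p$ on the right.

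The second step is the $q$-binomial simplification. After the limit I would have
\begin{equation*}
\sum_{n\geq 0}\sum_{\g\in G(r,n)}q^{\maj(\g)}p^{\ell_G(\g)}\frac{u^n}{(q;q)_n[\hat n]_{[r-1]_p,p}!}=\prod_{i\geq 0}e[q^iu]_p.
\end{equation*}
The plan is to rewrite the exponential factor using the classical $q$-exponential identity $e[v/(1-p)]_p=\prod_{j\geq 0}(1-p^jv)^{-1}$. Substituting $u\leftarrow u/(1-p)$ throughout, the right-hand product becomes $\prod_{i\geq 0}\prod_{j\geq 0}(1-p^j q^i u)^{-1}=(u;p,q)_{\infty,\infty}^{-1}$, which is exactly the target denominator. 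On the left, the substitution rescales $u^n$ by $(1-p)^{-n}$, and I would absorb this factor into the denominator by recalling from \eqref{gf:lengthG} that $[\hat n]_{[r-1]_p,p}!=[n]_p!(-p[r-1]_p;p)_n$; combining $[n]_p!(1-p)^{-n}$ with $(q;q)_n$ to produce the denominator $(q;q)_n(-p[r-1]_p;p)_n(p;p)_n$ claimed in \eqref{eq:roselle}.

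The bookkeeping obstacle I expect to be the most delicate is this denominator matching in the last step: one must verify that the $(1-p)^{-n}$ from the substitution, together with the $p$-factorial $[n]_p!$ hidden inside $[\hat n]_{[r-1]_p,p}!$, reassembles precisely into $(p;p)_n$. The key identity here is $[n]_p!\,(1-p)^{-n}=(p;p)_n/(1-p)^{2n}$—or rather, more carefully, $[n]_p=(1-p^n)/(1-p)$ gives $[n]_p!=(p;p)_n/(1-p)^n$, so that $[n]_p!\cdot(1-p)^{-n}$ is not quite $(p;p)_n$, and I would need to track the powers of $(1-p)$ with care to confirm they cancel correctly against the rescaling. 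Once the three factors $(q;q)_n$, $(-p[r-1]_p;p)_n$, and $(p;p)_n$ are seen to arise respectively from the $\max/|f|$ weighting, the length generating function \eqref{gf:lengthG}, and the $q$-binomial substitution, the identity \eqref{eq:roselle} follows. The limit interchange $t\to 1$ under the infinite sum is routine since everything is a formal power series in $u$, so no analytic justification is needed.
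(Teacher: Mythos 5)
Your proposal is correct and is essentially the paper's own proof: set $a=1$ in \eqref{eq:wreath} to get \eqref{maj-inv}, multiply by $(1-t)$ and let $t\to 1$, then substitute $u\mapsto u/(1-p)$ and apply the $q$-binomial formula $e[u/(1-p)]_p=\prod_{j\geq 0}(1-p^ju)^{-1}$. The $(1-p)$-bookkeeping you flagged resolves immediately: the numerator factor $(1-p)^{-n}$ produced by the substitution enters the denominator as $(1-p)^{+n}$ (not $(1-p)^{-n}$), and since $[n]_p!=(p;p)_n/(1-p)^n$ one has $[\hat n]_{[r-1]_p,p}!\,(1-p)^n=(-p[r-1]_p;p)_n\,[n]_p!\,(1-p)^n=(-p[r-1]_p;p)_n\,(p;p)_n$, which is exactly the denominator in \eqref{eq:roselle}.
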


By substitution $q_1 \leftarrow q_1^r$, $q_2 \leftarrow q_2^r$, $a \leftarrow q_1$, and $b\leftarrow q_2$, by multiplying both sizes of \eqref{e:6stat} by $(1-t_1)(1-t_2)$ and by letting $t_1 \rightarrow 1$ and $t_2 \rightarrow 1$ we obtain

\begin{proposition}[Adin-Roichman identity for $G(r,n)$] We have
\begin{align*}
\sum_{n\geq 0}\frac{u^n}{(q_1^r;q_1^r)_{n}(q_2^r;q_2^r)_{n}}
&\sum_{\g \in G(r,n)}
q_1^{\fmaj(\g)}
q_2^{\fmaj(\g^{-1})} = \frac{1}{(u; q_1^r, q_2^r)_{\infty,\infty} (q_1 q_2 [r-1]_{q_1,q_2}u; q_1^r, q_2^r)_{\infty,\infty}}.
\end{align*}
\end{proposition}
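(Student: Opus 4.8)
The target is the Adin--Roichman identity, which I expect to obtain as a limiting specialization of Theorem~\ref{5stat}. The plan is to start from the six-variable identity~\eqref{e:6stat} and perform the indicated substitutions, then pass to the $t_i \to 1$ limit. First I would set $q_1 \leftarrow q_1^r$, $q_2 \leftarrow q_2^r$, $a \leftarrow q_1$, and $b \leftarrow q_2$ in~\eqref{e:6stat}. The key reason this works is the definition of the flag-major index, $\fmaj(\g) = r\cdot\maj(\g) + \col(\g)$: under $q_1 \leftarrow q_1^r$ and $a \leftarrow q_1$ the product $q_1^{\maj(\g)} a^{\col(\g)}$ becomes $q_1^{r\maj(\g) + \col(\g)} = q_1^{\fmaj(\g)}$, and likewise $q_2^{\maj(\g^{-1})} b^{\col(\g^{-1})}$ becomes $q_2^{\fmaj(\g^{-1})}$. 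So after substitution the inner sum over $G(r,n)$ packages exactly into $q_1^{\fmaj(\g)} q_2^{\fmaj(\g^{-1})}$.

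\textbf{Taking the limit.} Next I would multiply both sides of the substituted identity by $(1-t_1)(1-t_2)$ and let $t_1 \to 1$, $t_2 \to 1$. On the left-hand side, the factor $(1-t_1)$ cancels the leading term of $(t_1;q_1^r)_{n+1} = (1-t_1)(1-t_1 q_1^r)\cdots(1-t_1 q_1^{rn})$, and in the limit $t_1 \to 1$ this denominator becomes $(q_1^r;q_1^r)_n$; symmetrically for $t_2$. This reproduces the denominator $(q_1^r;q_1^r)_n (q_2^r;q_2^r)_n$ on the left of the claimed formula. On the right-hand side, I would use the standard fact that $\lim_{t\to 1}(1-t)\sum_{k\ge 0} t^k c_k = \lim_{k\to\infty} c_k$ whenever the limit of the coefficients exists, applied to each of the two geometric-type sums over $k_1$ and $k_2$. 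Concretely, multiplying by $(1-t_1)(1-t_2)$ and letting $t_i \to 1$ sends the double sum $\sum_{k_1,k_2\ge 0} t_1^{k_1}t_2^{k_2}(\cdots)$ to the $k_1,k_2 \to \infty$ limit of the summand, which replaces the finite products $(u;q_1,q_2)_{k_1+1,k_2+1}$ and $(ab[r-1]_{a,b}u;q_1,q_2)_{k_1,k_2}$ by their infinite counterparts $(u;q_1^r,q_2^r)_{\infty,\infty}$ and $(q_1 q_2 [r-1]_{q_1,q_2}u;q_1^r,q_2^r)_{\infty,\infty}$, after applying the substitution to $a,b$ so that $ab[r-1]_{a,b} \leftarrow q_1 q_2 [r-1]_{q_1,q_2}$.

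\textbf{Main obstacle.} The routine substitutions and the bookkeeping of Pochhammer symbols are straightforward, so the delicate point is justifying the interchange of the $t_i \to 1$ limit with the infinite summations and products on the right-hand side, and confirming that the finite $q$-shifted factorials converge to their $\infty,\infty$ versions in the correct variables. I would handle this by treating the identity formally in the ring of power series in $u$: extracting the coefficient of $u^n$ on both sides reduces everything to finite sums and finite products, so the $t_i \to 1$ limit can be taken coefficientwise in $u$, where the Abel-type limit $\lim_{t\to 1}(1-t)\sum_k t^k c_k = \lim_k c_k$ is rigorous because each $u^n$-coefficient stabilizes once $k_1, k_2$ exceed a bound depending on $n$. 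This is analogous to how the Gessel--Roselle specialization was derived above by multiplying~\eqref{maj-inv} by $(1-t)$ and sending $t \to 1$, so the argument parallels a computation already carried out in this section.
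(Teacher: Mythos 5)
Your proposal is correct and follows exactly the paper's own route: the paper derives this identity from Theorem~\ref{5stat} by the substitutions $q_1 \leftarrow q_1^r$, $q_2 \leftarrow q_2^r$, $a \leftarrow q_1$, $b \leftarrow q_2$, multiplying both sides of \eqref{e:6stat} by $(1-t_1)(1-t_2)$, and letting $t_1 \to 1$, $t_2 \to 1$. Your additional justification of the coefficientwise Abel-type limit (paralleling the Gessel--Roselle specialization) only makes explicit what the paper leaves implicit.
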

Note that this identity gives an explicit formula for the generating function of the Hilbert series of the diagonal invariant algebra DIA, studied by Adin and Roichman in \cite[Theorem 4.1]{ar01}.  
This identity and the previous one (\ref{eq:roselle}), coincide in the case of the symmetric group, that is for $r=1$, while in the general case give rise to two different natural extensions of Gessel-Roselle identity for the symmetric group.


\end{document}